\newtheorem{theorem}{Theorem}
\newtheorem{theorema}{Theorem}
\newtheorem{theoremb}{Theorem}
\newtheorem{theoremd}{Theorem}
\newtheorem{theoreme}{Theorem}
\newtheorem{dfn}[theoremb]{Definition}
\newtheorem{rk}[theoremd]{Remark}
\newtheorem{lem}[theoreme]{Lemma}
\newtheorem{prop}[theorem]{Proposition}
\newenvironment{Proof}[1]{\textbf{#1.} }
\newcommand\bib[1]{\bibitem[#1]{#1}}
\renewcommand\a{\alpha}
\renewcommand\b{\beta}
\newcommand\C{{\mathbb C}}
\newcommand\CC{{\mathcal C}}
\newcommand\D{{\mathcal D}}
\newcommand\E{\mathcal{E}}
\newcommand\g{\gamma}
\renewcommand\l{\lambda}
\newcommand\oo{\omega}
\newcommand\op[1]{\mathop{\rm #1}\nolimits}
\newcommand\ot{\otimes}
\newcommand\p{\partial}
\newcommand\R{{\mathbb R}}
\renewcommand\t{\tau}
\newcommand\ti{\tilde}
\newcommand\vp{\varphi}
\newcommand\we{\wedge}
\newcommand\z{\sigma}
\newcommand{\comm}[1]{}
\begin{document}

 \title[Lie theorem via rank 2 distributions]{Lie theorem via rank 2 distributions \\
 (integration of PDE of class $\oo=1$)}
 \author{Boris Kruglikov}
 \address{Institute Mathematics and Statistics, NT-faculty, University of Troms\o, Troms\o\ 90-37, Norway}
 \email{boris.kruglikov@uit.no}
 \keywords{Lie's class 1, Darboux integrability,
system of PDEs, characteristic, integral, Goursat flag, symbols,
compatibility, Spencer cohomology.}

 \vspace{-14.5pt}
 \begin{abstract}
In this paper we investigate compatible overdetermined systems of
PDEs on the plane with one common characteristic. Lie's theorem
states that its integration is equivalent to a system of ODEs, and
we relate this to the geometry of rank 2 distributions. We find a
criterion for integration in quadratures and in closed form, as in
the method of Darboux, and discuss nonlinear Laplace transformations
and symmetric PDE models.
 \end{abstract}

 \maketitle

\section*{Introduction}

Consider a system $\E$ of partial differential equations of (maximal) order $k$
(geometrically a submanifold in $k$-jets).
We will study its integration micro-locally near a regular point
(in a neighborhood $U\subset\E$).

We will mainly restrict to the case of two independent and one dependent variables
(non-scalar systems can be treated similarly or re-written via Drah's trick \cite{Sto};
the case of more independent variables makes no fundamental difference either).

\subsection{Formulation of the problem}\label{S01}

We assume the system $\E$ is overdetermined and is compatible (formally integrable).


Whenever the above assumptions on $\E$ are fulfilled we can evaluate
formal dimension and rank of the system (like in Cartan test \cite{C$_3$}),
namely determine on how many functions of how many variables a
general solution formally depends.

For a (determined or overdetermined) system on the plane two situations are possible. Either the
system has empty characteristic variety or the latter is an
effective divisor on $\C P^1$ (Weil divisor in algebraic geometry;
here a finite set of points with multiplicities).

In the first case the system $\E$ is of finite type, i.e. its local
solution space is finite dimensional. Then integration of $\E$ can
be reduced to a system of ODEs; we call such systems Frobenius.

In the second case additional assumptions must be imposed to
guarantee existence of solutions. Counting of solutions can still be
carried on the formal level and the general stratum of the solution
space of $\E$ is parametrized by $\oo$ functions of 1
variable\footnote{Sophus Lie called the number $\oo$ class of the
system; for Ellie Cartan this is the character $s_1$ (provided the
Cartan number is 1: $s_i=0$ for $i>1$).}, where $\oo$ is the degree of
the divisor, i.e. the number of points counted with multiplicities.

Note that $\oo$ can be described in a different way: Since the
characteristic variety is discrete, the symbol $g_k$ of the system
$\E$ stabilizes and its dimension is $\dim g_k=\oo$ for big $k$.

In this paper we restrict to the case $\oo=1$, which is the next
simplest after the finite type case $\oo=0$.

\subsection{Main results}\label{S02}

For $\oo=1$ Sophus Lie obtained in 1893 a theorem, which states that this
case can be reduced to ODEs as well:

 \begin{theorema}\label{thA}
A compatible regular overdetermined system $\E$ of class $\oo=1$ can
be locally integrated via ODEs.
 \end{theorema}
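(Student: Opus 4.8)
The plan is to encode the system $\E$ of class $\oo=1$ as a geometric object whose integrability is visibly ODE-theoretic. Since the characteristic variety is a single point (with multiplicity one) on each fibre $\C P^1$, the symbol $g_k$ is one-dimensional for $k\gg0$, and the prolongation-projection tower stabilizes: the infinite prolongation $\E^\infty$ carries a Cartan distribution whose derived flag has growth vector that stabilizes to a fixed pattern. First I would show that on $\E^\infty$ (or on a finite-order truncation $\E^{(l)}$ large enough that the symbol has stabilized) the Cartan distribution $\CC$ has rank $2$: one direction is the single characteristic direction determined by the unique point of the characteristic divisor, and the second comes from the Cauchy/trivial direction along the prolongation variables. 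The compatibility (formal integrability) hypothesis is exactly what guarantees that no new obstructions appear under further prolongation, so this rank-$2$ distribution is well-defined and its derived series is a Goursat flag.

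Next I would bring in the classical theory of rank $2$ distributions. A completely nonholonomic rank $2$ distribution $D$ on an $n$-manifold whose small growth vector is $(2,3,4,\dots,n)$ — i.e. a Goursat flag — is, by the Goursat normal form (Engel's theorem and its iterates, von Weber--Cartan), locally equivalent to the canonical contact system on the jet space $J^{n-2}(\R,\R)$, that is, the system $dy_i = y_{i+1}\,dx$. The point is that such a distribution is \emph{flat} in the sense that its integral curves (one-dimensional integral manifolds tangent to $D$) are the prolongations of curves in the plane, hence are found by solving ODEs: one integrates along the characteristic, and the remaining jet-variables are determined by quadrature-free differentiation. Thus I would verify that the rank $2$ distribution coming from $\E^\infty$ is indeed Goursat (growth vector strictly increasing by $1$ at each step, which follows from $\dim g_{k+1}-\dim g_k$ being eventually $0$ together with $\oo=1$), and then invoke the normal form to conclude that integral curves of $\CC$ — equivalently, solutions of $\E$ — are obtained by integrating a chain of ODEs.

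Concretely the steps are: (1) reduce to the prolonged system $\E^{(l)}$ with stabilized one-dimensional symbol, using compatibility to ensure $\E^{(l)}$ is a smooth submanifold projecting submersively down the tower; (2) identify the Cartan distribution on $\E^{(l)}$ and show it has rank $2$ with a nowhere-vanishing characteristic direction — here the regularity assumption keeps the characteristic point and the symbol of constant type; (3) compute the derived flag and check the Goursat (Engel-type) growth condition, which is where the hypothesis $\oo=1$ is essential, since $\oo\geq2$ would give a rank $>2$ stable distribution; (4) apply the Goursat/Engel normal form to trivialize the distribution, and read off that its maximal integral curves, which correspond bijectively to local solutions of $\E$, are graphs of solutions of an explicit system of ODEs. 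Since the correspondence in the last step is purely differential-algebraic, integrating $\E$ is equivalent to integrating these ODEs.

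I expect the main obstacle to be step (3): establishing that the derived flag of the Cartan distribution on the stabilized prolongation is genuinely a \emph{Goursat} flag — that the growth vector increments by exactly one at every stage rather than jumping — and that no singularities of the flag (Goursat flags famously have non-generic ``Cartan prolongation'' strata) intrude near the regular point. This requires a careful symbol computation: one must relate the Spencer cohomology / the structure of the symbolic system $\{g_k\}$ (which by $\oo=1$ and compatibility is forced to be that of a Goursat-type symbol) to the bracket-growth of $\CC$, showing the two coincide. The regularity and compatibility hypotheses are precisely what rule out the bad strata, but making this rigorous — rather than heuristic — is the technical heart of the argument; the rest follows from the classical normal-form theory for rank $2$ distributions.
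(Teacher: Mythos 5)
Your proposal has two genuine problems, one a missing idea and one a step that is actually false. First, the rank-$2$ structure does not live on the Cartan distribution of the prolonged equation itself: on the involutive prolongation $\mathcal{C}_\E$ has rank $n+1$ (rank $3$ for two independent variables), being spanned by the $n$ total derivatives plus the one-dimensional symbol $g_k$. The heart of the argument --- which your sketch skips --- is to show that the lift of the characteristic hyperplane $p^\perp\subset TW$ can be corrected so as to become an $(n-1)$-dimensional subdistribution $\Pi\subset\mathcal{C}_\E$ consisting of \emph{Cauchy characteristics} (this uses the symbol computation $g_k=\langle p^k\otimes v\rangle$ and an adjustment of the horizontal space by a graph over $\Pi^*\otimes g_k$). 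Only after quotienting by the Frobenius-integrable $\Pi$ does one obtain a rank-$2$ distribution $\Delta=\mathcal{C}_\E/\Pi$; your description of the two directions (``the characteristic direction'' and ``the Cauchy direction'') conflates the direction one quotients out with the two that survive.

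Second, and more seriously, your step (3) is wrong: the reduced rank-$2$ distribution is \emph{not} a Goursat flag in general. For nonlinear class $\oo=1$ systems the growth vector is unrestricted and generically of the form $(2,3,5,\dots)$; e.g.\ Cartan's model $u_{xx}=\l,\ u_{xy}=\tfrac12\l^2,\ u_{yy}=\tfrac13\l^3$ reduces to the Hilbert--Cartan distribution with growth $(2,3,5)$, which admits no Goursat normal form. If your Goursat claim were true it would prove that every class $\oo=1$ system is internally linearizable and integrable in closed form --- which is precisely the content of Theorem B and is false in general. Fortunately the Goursat property is not needed for Theorem A: \emph{any} rank-$2$ distribution has integral curves obtained by solving an underdetermined ODE system (choose one function of one variable freely, then integrate), and the inverse of the quotient map $\E\to\E/\Pi$ turns each such curve into an $n$-dimensional solution. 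So the correct proof is both weaker in its geometric claim and simpler in its ODE input than what you propose; the technical heart is the Cauchy-characteristic lemma, not any normal-form theorem.
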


The proof in \cite{L} is rather sketchy. The result was later obtained
in \cite{De} without reference to Lie. We will demonstrate the
claim via geometry of rank 2 distributions and relate it to other
important results\footnote{Let us mention that paper \cite{Y} discusses another
reduction to ODEs for the involutive PDE systems of the 2nd order. This family meets
ours by class $\oo=1$ systems of type $2E_2$ in terminology of \S\ref{S21},
which were studied by Cartan in \cite{C$_1$}.}.

Three remarks are of order. First is that we can change here formal
integrability to local integrability. Generally it is wrong due to
Levi and similar examples. The essential feature is regularity of
the characteristic variety and peculiarity of $\oo=1$ (i.e. for $\oo>1$ the passage
from formal integrability to local integrability is not generally correct).

Second, and more important, is that the reduction procedure can be
made explicit and this allows not only to claim the reduction
theoretically, but also to develop a practical algorithm for
integrability.

And third is that facing only the problem of reduction, we can allow
arbitrary number of independent and dependent variables, leaving
only the requirements of regularity and $\oo=1$, i.e. precisely one
common characteristic (counted with multiplicity) for the equations
of $\E$.

In this paper we mostly concentrate on the problem of effective
integration of PDE systems. So we are especially interested in quadrature
and integration in closed form (an ideal case of Darboux integrability --
the definitions are below in the text, see also \cite{Da,F,AFV}).

 \begin{theorema}\label{thB}
A system $\E$ of class $\oo=1$ is integrable in closed form iff it is
linearizable by an internal transformation.

A system $\E$ of class $\oo=1$ is integrable in quadratures iff it has
a transitive solvable Lie algebra of internal symmetries.\footnote{Internal symmetries 
are transformations of the equation $\E$ considered as a manifold preserving the induced 
Cartan distribution. They are more general than the classical Lie symmetries, but can 
differ from the higher (Lie-B\"acklund) symmetries.}
 \end{theorema}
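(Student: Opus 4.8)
\emph{Proof strategy.} The plan is to transfer the statement to the rank 2 distribution attached to $\E$, where both halves become questions about a Goursat flag and its symmetries. By Theorem~\ref{thA} and the reduction behind it, after a finite prolongation the unique (multiplicity one) characteristic of $\E$ becomes a Cauchy characteristic of the Cartan distribution; quotienting by its flow produces a finite-dimensional manifold $M$ with a rank 2 distribution $\D$ of Goursat type, $\E_\infty$ is recovered as the Cartan prolongation tower of $(M,\D)$, and internal symmetries of $\E$ correspond to local symmetries of $(M,\D)$. The derived flag $\D\subset[\D,\D]\subset\dots$ stabilizes to an integrable distribution whose leaf space $B$ is the space of characteristic first integrals of $\E$; the number $\dim B$ and the position of $\D$ in the fibers of $M\to B$ govern the integration. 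I would first make this dictionary precise --- this is essentially the content of Theorem~\ref{thA} --- so that Theorem~\ref{thB} becomes: closed-form (ideal Darboux) integrability of $\E$ holds iff the characteristic system of $\D$ admits the maximal number of independent first integrals (equivalently $(M,\D)$ is flat), and integrability in quadratures holds iff the ordinary differential system of Lie's reduction is solvable by a finite chain of quadratures.

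For the first equivalence the implication ``linearizable $\Rightarrow$ closed form'' is immediate: a linear system of class $\oo=1$ has general solution $u=u_0+\sum_j a_j\,h^{(j)}(\vp)$, a finite-order differential operator applied to an arbitrary function $h$ along a characteristic invariant $\vp$ plus a particular solution, which is the prototype of a closed-form answer, and an internal transformation preserves closed-form solvability. For the converse I would work on $(M,\D)$: closed-form integrability forces the characteristic system of $\D$ to carry a full set of first integrals, and adjoining these to a coordinate system flattens the Goursat flag together with the fibration $M\to B$ to the flag of the jet space $J^m(\R,\R)$ of functions of one variable times the trivial factor $B$, the characteristic invariant playing the role of the independent variable; pulling this normal form back through Lie's reduction exhibits $\E$ as internally equivalent to a linear system. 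The point is that the first integrals supplied by closed-form integrability are precisely the functions needed to straighten the nonlinear part of the reduction; with fewer than the maximal number of them the fiberwise structure carries genuine functional moduli and no internal linearization can exist.

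For the second equivalence, ``transitive solvable algebra of internal symmetries $\Rightarrow$ quadratures'' is the Lie--Bianchi integration theorem read through the dictionary: a transitive solvable symmetry algebra of $(M,\D)$, through its descending chain of ideals, supplies a solvable structure for the reduced ordinary differential system, each successive first integral is then obtained by one quadrature, and transitivity makes the reduction run to completion. For the converse, integrability in quadratures produces step by step the integrating one-forms of the reduced system, hence a solvable structure; the additional input --- and here the hypothesis $\oo=1$ is essential --- is that on a Goursat flag such a solvable structure can be promoted to a genuine transitive solvable Lie algebra of symmetries of $\D$, because the symmetry pseudogroup of a Goursat flag is large and transitive and each quadrature corresponds to a one-parameter group of symmetries commuting with the earlier ones modulo lower terms of the flag. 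Transferring back along the dictionary gives the required transitive solvable algebra of internal symmetries of $\E$.

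The step I expect to be the main obstacle is the converse of the first part: converting the soft count ``the characteristic system has the maximal number of first integrals'' into an explicit internal transformation linearizing $\E$, in particular excluding genuinely nonlinear closed-form answers not reducible to linear ones by any internal change. This calls for precise control of the action of internal transformations on the Goursat flag and on the fibration $M\to B$, and for a normalization of the fiberwise structure along the leaves of the stabilized derived flag. A secondary difficulty is the converse of the second part, where one must show that for class $\oo=1$ systems a solvable structure --- which in general yields only quadratures and no symmetries --- actually comes from a transitive solvable symmetry algebra; this again relies on the essentially flat local structure of Goursat flags.
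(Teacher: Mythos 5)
Your overall architecture --- quotient by the Cauchy characteristic to get a rank $2$ distribution $(M,\Delta)$, translate internal symmetries and integrability of $\E$ into properties of $\Delta$, then invoke normal forms and the Lie theorem on solvable symmetry algebras --- is the paper's architecture. But your ``dictionary'' contains a false entry on which both converses lean: you assert that the reduced distribution $\Delta$ is always ``of Goursat type.'' It is not. For a general nonlinear class $\oo=1$ system the growth vector of $\Delta$ is unrestricted (typically $(2,3,5,\dots)$; e.g.\ Cartan's involutive $2E_2$ model reduces to the Hilbert--Cartan equation, which is not Goursat and not linearizable). The content of the first half of Theorem~\ref{thB} is precisely that $\Delta$ is Goursat (possibly times a Frobenius factor recording the intermediate integrals) if and only if $\E$ is internally linearizable if and only if it is integrable in closed form. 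By building ``Goursat'' into the hypotheses you have assumed the hard half of part one; and your converse for part two (``a solvable structure can be promoted to a transitive solvable symmetry algebra because the symmetry pseudogroup of a Goursat flag is large and transitive'') collapses for the same reason, since a generic rank $2$ distribution has no symmetries at all.

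The ingredients you are missing, as the paper deploys them, are: (i) Cartan's theorem (\cite{C$_2$}, see also \cite{Ku}) that the integral curves of a rank $2$ distribution admit a closed-form parametric description iff the distribution is Goursat, i.e.\ equivalent to the Cartan distribution on $J^d(\R,\R)$ --- your substitute criterion, that ``the characteristic system of $\Delta$ carries a full set of first integrals,'' is imported from Darboux integrability of hyperbolic equations with two characteristic systems and has no counterpart here, where there is a single characteristic and the relevant invariant is the growth vector of the derived flag; and (ii) the result of \cite{K} that the Goursat(-Frobenius) normal form of $\Delta$ is equivalent to internal linearizability of $\E$, together with the count $q+m=\varkappa$ matching derivatives and constants in the closed-form expression. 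For the quadrature statement your positive direction agrees with the paper's Theorem~\ref{Thm1} (quotient by the solvable group and apply the classical Lie theorem along the fibers over a chosen curve in the quotient); the converse, however, needs an argument that does not presuppose a large, transitive symmetry pseudogroup of $\Delta$.
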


Of course, one is interested in algorithmic integration, so that an effective linearization is
important. Then a sequence of generalized Laplace transformations (these are the external 
transformations introduced for class $\oo=1$ in \cite{K}) finishes the job.


As we shall explain, determining both linearization and quadrature is related to investigation
of the rank 2 distributions internally related to the system $\E$.
Linearizable systems correspond to Goursat distributions, i.e.
canonical Cartan distributions on the jet spaces for ODEs (in
general non-linear situation the growth vector is un-restricted).
Rank 2 distributions for the systems integrable in quadratures have the structure of
integrable extensions, which can be decoded starting from its Tanaka algebra.

Thus we can model types of reduction, based on the normal forms of
rank 2 distributions. In particular, the simplest among exactly solvable
class $\oo=1$ compatible non-linearizable PDE systems will be those that can be
reduced to Hilbert-Cartan equation (it's symmetry algebra will contain the exceptional
Lie group $G_2$). More complicated examples will be presented at the end of the paper.

\subsection{Structure of the paper}\label{S03}

We will exploit the geometric theory of PDE, jet-geometry and the basics of
Spencer formal theory. We will also use the geometry of vector distributions.
The reader is invited to consult \cite{S,T,KLV} for details.

Notations are different from source to source, and we adapt those of \cite{KL}.
Since this paper is a continuation of \cite{K}, an acquaintance with the latter
will be useful (but not mandatory).

The paper is organized as follows.

In Section \ref{S1} we recast the class $\oo=1$ systems into the language of the geometry
of differential equations and provide a new modern proof of Theorem \ref{thA}.
Reduction to rank 2 distributions is the crucial ingredient\footnote{There is no
difference in arguments and in Section \ref{S1} we take dimension of the base $\R^n(x)$ arbitrary.
In further sections we restrict to $n=2$ independent variables for simplicity of exposition.}.
We then discuss an algorithmic method to integrate such systems and prove the
1st half of Theorem \ref{thB}.

In Section \ref{S2} we discuss another more general method of integration of PDEs via
integrable extensions (coverings), and relate this to the generalized symmetries.
Notice that integrable extensions for rank 2 distributions were
classified in \cite{AK}, so their description in the symmetric cases reduce to
purely algebraic questions.

In Section \ref{S3} we formulate the main invariants of compatible systems $\E$ of
class $\oo=1$, and we discuss transformations of such systems in linear and non-linear
cases. We investigate linear system from the viewpoint of internal geometry
(complimentary to the external point of view in \cite{K}), obtain the linearization criterion
and finish the proof of Theorem \ref{thB}.
Depending on the type of the system and its reduced rank 2 distribution we can
describe the structure of the general solution and a method of its integration.

Section \ref{S4} is devoted to various examples of compatible PDE systems
of class $\oo=1$. We will perform integration via the method of integrable 
extensions, generalized nonlinear Laplace transformations and discuss 
their relation to Darboux integrability. Some of the most symmetric examples
are coverings of the overdetermined involutive system of order 2 on the plane 
investigated by E.Cartan.

\medskip

\textsc{Acknowledgment.} It is a pleasure to thank Nail Ibragimov for his translation of
Sophus Lie paper \cite{L} in \cite{LG}, which was a starting point for this paper.
I am grateful to Valentin Lychagin for many discussions on the initial stage of the project.
The paper is strongly influenced by a collaboration with Ian Anderson, to whom I am thankful.

Hospitality of MLI (Stockholm) in 2007, MFO (Oberwolfach) in 2008,
Banach center (Warsaw) and Utah State University in 2009, and IHES (Paris) in 2010
is highly acknowledged.

\section{Around Sophus Lie theorems}\label{S1}

In this section we give a modern proof of Theorem \ref{thA}.
Sophus Lie's original approach is indirect and hard to implement.
We present a geometric method, which is the base of our approach to integration of class $\oo=1$ systems.
Furthermore we will elaborate this theorem to get the constructive Theorem \ref{thB}.


\subsection{The geometric setup}\label{S11}

Consider a compatible overdetermined system of PDEs as a submanifold in the space of jets
$\E\subset J^k(W,N)$, where $W=\R^n$ is the space of independent variables $x=(x^i)$,
$N=\R^m$ is the space of dependent variables $u=(u^j)$
(assuming $\E$ to be of pure order $k$ is not crucial).
Let $\pi_k:J^k\to W$, $\pi_{k,k-1}:J^k\to J^{k-1}$ denote the natural projections.

We prolong the system to the level it becomes involutive (see the discussion about
relation of this with compatibility in \cite{KL} and \cite{K}). The assumption on the class $\oo=1$ yields $\dim g_k=1$ for
the symbol space starting from this level (still denoted by $k$).

This jet-space is equipped with the canonical Cartan distribution $\mathcal{C}=\op{Ann}(\theta_\sigma^j:|\sigma|<k)\subset TJ^k$
(where $\theta_\sigma^j=du_\sigma^j-\sum u_{\sigma+1_i}^jdx^i$ in canonical coordinates).
The induced Cartan distribution on the equation $\mathcal{C}_\E=\mathcal{C}\cap T\E$ has rank $n+1$.
Indeed, it is generated by one vertical vector -- a generator of the symbol $g_k=\op{Ker}(d\pi_{k,k-1}:T\E\to TJ^{k-1})$,
which is defined up to scaling, and $n$ total derivatives
$\D_{x^i}=\p_{x^i}+\sum u_{\sigma+1_i}^j\p_{u_\sigma^j}$ restricted to the equation, which are defined mod $g_k$.
Denoting the horizontal space by $H$ (not canonical) we get
 $$
\mathcal{C}_\E=H\oplus g_k.
 $$

 \begin{lem}\label{LemChar}
Let $p\in PT^*W$ be the (unique) characteristic covector. There is a unique $(n-1)$-dimensional subdistribution
$\Pi\subset\mathcal{C}_\E$ such that $d\pi_k(\Pi)=p^\perp\subset TW$ and $\Pi$ consists of Cauchy characteristics
of $\mathcal{C}_\E$.
 \end{lem}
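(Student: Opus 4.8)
The plan is to construct $\Pi$ explicitly as the intersection of $\mathcal{C}_\E$ with a suitable kernel built from the symbol, and then to recognize the resulting subspace as the Cauchy characteristic space. First I would recall that at a regular point the characteristic variety of $\E$ is the single covector $p\in PT^*W$ with multiplicity one, so the (stabilized) symbol $g_k$ is one-dimensional and its generator $\xi$ is a vertical vector whose ``characteristic direction'' corresponds to $p$: in coordinates, choosing $p=\sum p_i\,dx^i$ we have $g_k$ spanned by a single $\p_{u^j_\sigma}$-combination with $\sigma$ proportional to $p$ (the $k$-th symmetric power of $p$). The candidate for $\Pi$ is then
$$
\Pi=\{X\in\mathcal{C}_\E : d\pi_k(X)\in p^\perp \text{ and } [X,g_k]\subset\mathcal{C}_\E\},
$$
or, what I expect to be cleaner, $\Pi=\mathcal{C}_\E\cap\op{Ker}(d\pi_{k,k-2})$ cut down by the horizontal condition $d\pi_k(X)\in p^\perp$. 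I would check that the total derivatives $\D_{x^i}$ with $\sum p_i\,dx^i$ annihilating the horizontal direction, i.e. the $\D_v$ for $v\in p^\perp$, are tangent to $\E$ modulo $g_k$ precisely because $p$ is characteristic — this is the defining property of the characteristic covector — and that together with the requirement of living in $\mathcal{C}_\E$ this pins down an $(n-1)$-dimensional space.

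The two substantive points to verify are existence/dimension and the Cauchy characteristic property. For the dimension count: $d\pi_k:\mathcal{C}_\E\to TW$ has image all of $TW$ (the $\D_{x^i}$ project onto a frame) and kernel $g_k$, so the preimage of the hyperplane $p^\perp\subset TW$ inside $\mathcal{C}_\E$ has dimension $n$; imposing one further linear condition (coming from compatibility along the non-characteristic complement, equivalently selecting the lift that is genuinely Cauchy) brings it to $n-1$. Uniqueness follows because $p^\perp$ is a fixed hyperplane and, once we demand the Cauchy property, the lift of each $v\in p^\perp$ to $\mathcal{C}_\E$ is forced modulo nothing — the ambiguity $g_k$ is killed by the bracket condition since $[\D_v,\xi]$ has a nonzero $g_k$-component exactly when $v\notin p^\perp$ (this is where characteristicity enters quantitatively).

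For the Cauchy characteristic claim I would show $[\Pi,\mathcal{C}_\E]\subset\mathcal{C}_\E$. Write a local frame of $\mathcal{C}_\E$ as $\{\D_{x^1},\dots,\D_{x^n},\xi\}$ modulo the indeterminacy, with $\Pi$ spanned by $\{\D_v : v\in p^\perp\}$ suitably corrected. The brackets $[\D_{x^i},\D_{x^j}]$ lie in $\mathcal{C}_\E$ on a formally integrable (involutive) equation; the bracket $[\D_{x^i},\xi]$ lands in $\mathcal{C}_\E$ iff $\p_{x^i}$ is a non-characteristic direction relative to $\E$, and by construction $\Pi$ is spanned exactly by those combinations for which this holds — this is the algebraic content of $p$ being \emph{the} characteristic covector. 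Thus the structure functions obstructing the Cauchy property are supported on the single characteristic direction, which $\Pi$ avoids. The main obstacle I anticipate is making the bracket computation $[\D_v,\xi]$ clean: one must use the prolonged/involutive form of $\E$ so that the symbol is stable and the Spencer $\delta$-complex identifies the $g_k$-component of $[\D_v,\xi]$ with the pairing of $v$ against $p$ (up to the normal form of the symbol as a line in $S^kW^*\ot N$), and one must verify this pairing is the \emph{only} obstruction — i.e. that no higher-order terms spoil tangency. Once that identification is in hand, both existence with the right dimension and the Cauchy property drop out simultaneously, and uniqueness is immediate from the rigidity of $p^\perp$.
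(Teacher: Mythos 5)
Your overall strategy --- lift $p^\perp$ into $\mathcal{C}_\E$, compute the obstruction brackets through the symbol pairing $TW\otimes S^kT^*W\otimes TN\to S^{k-1}T^*W\otimes TN$ applied to $g_k=\langle p^k\otimes v\rangle$, and let characteristicity of $p$ kill the obstruction --- is the same as the paper's, and your observation that $[\D_v,\eta]\in\mathcal{C}_\E$ for $v\in p^\perp$ and $\eta$ a section of $g_k$ is the correct core computation. But both of your explicit candidates for $\Pi$ fail, and the failure conceals the one step that actually requires an argument. The set $\{X\in\mathcal{C}_\E:\ d\pi_k(X)\in p^\perp,\ [X,g_k]\subset\mathcal{C}_\E\}$ is the \emph{full} $n$-dimensional preimage of $p^\perp$, not an $(n-1)$-dimensional space: the vertical line $g_k$ itself satisfies $[g_k,g_k]\subset g_k\subset\mathcal{C}_\E$, and by your own computation $[\D_v,\eta]\in\mathcal{C}_\E$ already holds for \emph{every} lift of \emph{every} $v\in p^\perp$, so the condition is invariant under $X\mapsto X+c\eta$ and cannot fix the vertical component. (The ``cleaner'' candidate is worse: since restricted total derivatives have nonzero $\p_{x^i}$-components, $\mathcal{C}_\E\cap\op{Ker}(d\pi_{k,k-2})=g_k$, which is $1$-dimensional and projects to $0$, not onto $p^\perp$.) Your uniqueness argument inherits the same confusion: you invoke the bracket of $\D_v$ with the vertical generator, but that bracket does not see the ambiguity you are trying to remove. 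A side remark: the obstruction is not a ``$g_k$-component''; it lives in $\nu_{k-1}=\langle p^{k-1}\otimes v\rangle$, which is transverse to $\mathcal{C}_\E$.

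The missing idea is that the vertical ambiguity is resolved by bracketing against the \emph{transverse horizontal} direction. Choose $\zeta$ horizontal with $p(\zeta)\neq0$; then $L_\zeta:g_k\to\nu_{k-1}$ is an isomorphism because the pairing $(\zeta,p^k\otimes\theta)\mapsto k\,p(\zeta)\,p^{k-1}\otimes\theta$ is nonzero. Hence for each $v\in p^\perp$ there is a \emph{unique} vertical correction $c(v)\eta$ such that $[\D_v+c(v)\eta,\zeta]\in\mathcal{C}_\E$; the paper realizes this by replacing the naive lift of $p^\perp$ with the graph of $-\tau(\zeta,\cdot)$ under the identification $\nu_{k-1}\simeq g_k$ via $L_\zeta$. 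This single step simultaneously produces the well-defined $(n-1)$-dimensional $\Pi$, establishes the Cauchy property in the transverse direction (the only one your argument leaves open), and gives uniqueness, precisely because $\langle\zeta\rangle\otimes g_k\to\nu_{k-1}$ is nonzero. As written, your proof never isolates this correction, so $\Pi$ is not well defined and the Cauchy property is only verified against the directions for which it is automatic.
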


 \begin{proof}
Let $H$ be some choice of horizontal space, $\Pi\subset H$ the lift of $p^\perp$
and $\eta$ a vertical vector field (section of $g_k$).

Let us use the standard identification $\pi_{k,k-1}^{-1}(*)\simeq S^k T^*W\ot TN$ \cite{KLV}.
Then the condition $\oo=1$ translates to $g_k=\langle p^k\ot v\rangle$ for some $v\in TN$.
The Lie bracket induces a pointwise bracket
$H\otimes g_k\to\nu_{k-1}=\langle p^{k-1}\ot v\rangle\subset S^{k-1} T^*W\ot TN$.

With identification $H\simeq TW$ this latter is the restriction of the natural pairing
$TW\otimes S^k T^*W\ot TN\to S^{k-1} T^*W\ot TN$. It follows that $[\xi,\eta]=0\,\op{mod}\mathcal{C}_\E$
and $[\xi,\xi']=0\,\op{mod}\mathcal{C}_\E$ $\forall$ $\xi,\xi'\in\Pi$.


It remains to choose an additional vector field $\zeta$ in $H\setminus\Pi$ and consider the induced bracket
$\tau:\langle\zeta\rangle\otimes\Pi\to\nu_{k-1}$. It can be non-zero since the pairing
$(\zeta,p^k\otimes\theta)\mapsto k\,p^{k-1}p(\zeta)\otimes\theta\ne0$.

Let us change $H=\langle\zeta\rangle\oplus\Pi$ by modifying $\Pi$ as the graph of the map
$-\tau(\zeta,\cdot)\in\Pi^*\ot\nu_{k-1}\simeq\Pi^*\ot g_k$ (identification
via $L_\zeta|_{g_k}$). Then the new space $\Pi$ is still involutive
with respect to the induced bracket and it commutes with both $\zeta$ and $\eta$ mod $\mathcal{C}_\E$.
This means that the sections of $\Pi$ are Cauchy characteristics.

Uniqueness of $\Pi$ follows from the fact that the above (bracket)
pairing $\langle\zeta\rangle\otimes g_k\to\nu_{k-1}$ is non-zero.
 \end{proof}

 \begin{rk}
For $n=2$ characteristic vectors are dual to characteristic covectors. It is not however true that
the former can be lifted to Cauchy characteristics of $\mathcal{C}_\E$. This is peculiarity of
the case $\oo=1$.
 \end{rk}


\subsection{Reduction to rank 2 distributions}\label{S12}

Due to Lemma \ref{LemChar} internal geometry of the distribution $\mathcal{C}_\E$
is equivalent to that of the rank 2 distribution $\mathcal{C}_\E/\Pi$.
This implies Sophus Lie theorem:

\smallskip
 \begin{Proof}{Proof of Theorem \ref{thA}}
Consider the pair $(\E,\mathcal{C}_\E)$. Solutions of the system are
$n$-dimensional integral submanifolds of the distribution, whose
projection to the base are submersive.

It follows from the proof of Lemma \ref{LemChar} that an $n$-dimensional subspace of $\mathcal{C}_\E$
is involutive with respect to the (bracket) pairing $\Lambda^2\mathcal{C}_\E\to\nu_{k-1}$ iff
it contains $\Pi$. In other words, a solution must be tangent to $\Pi$.

It is the standard fact, that the sub-distribution $\Pi$ generated by Cauchy characteristics
is integrable and shifts along it are symmetries for $\mathcal{C}_\E$.
Taking the (local) quotient we arrive to the manifold $M=\E/\Pi$ (quotient by the leaves)
equipped with a rank 2 distribution $\Delta=\mathcal{C}_\E/\Pi$ without characteristics.


Such a distribution has integral curves, which are found by solving underdetermined ODEs.
The space of integral curves is locally parametrized by 1 arbitrary function of 1 variable
(determinization of the ODE, given e.g. by the choice of a curve in the image of any submersion
$M\to\R^2$ with fibers transversal to $\Delta$). The inverse of the quotient map $\E\to M$ sends
any of them to an $n$--dimensional integral surface, i.e. the solutions of $\E$. \qed
 \end{Proof}

\smallskip

Note that this proof, as well as the arguments from the previous
subsection, uses integration of ODE systems twice: first to solve
the Frobenius system, corresponding to Cauchy characteristics $\Pi$,
and then to find the integral curves of $\Delta=\mathcal{C}_\E/\Pi$.

The latter integration can be split in turn into integration of the
bracket-closure of the distribution $\Delta_\infty=\Delta+[\Delta,\Delta]+\dots$,
which is Frobenius and then integrating $\Delta$ in the leaves.

In the first case the order of the system is $\dim\E-(n-1)=\op{codim}\Pi$.
In the second it is split into an ODE of order equal to the number of first integrals for $\Delta$
in $M$ ($=\op{codim}\Delta_\infty$) and an ODE of order $\op{dim}\Delta_\infty-2$.

\subsection{Constructive integration methods}\label{S13}

A theorem of Sophus Lie states that ODEs with a transitive solvable Lie algebra of symmetries
are integrable in quadratures. This is equivalent to the claim that if a holonomic
distribution\footnote{This means it satisfies the Frobenius condition $[\Gamma(\Delta),\Gamma(\Delta)]\subset\Gamma(\Delta)$.} $\Delta$ on a manifold $M$
admits a solvable symmetry Lie group of complimentary dimension
with orbits transversal to it, then the integral leaves of $\Delta$ can be expressed in
quadratures \cite{KLR}.

We extend this theorem to non-holonomic distributions $\Delta$.
We assume at first the distributions are completely non-holonomic, i.e. the bracket closure
$\Delta_\infty$ equals $TM$ and so $\Delta$ has no first integrals.
Generic such distributions have no integral surfaces, and integral curves
(which always exist) are the maximal integral manifolds.

 \begin{theorem}\label{Thm1}
Let $\Delta$ be a completely non-holonomic distribution of rank $r$ on a manifold $M^n$.
Suppose a solvable Lie group $G$ of dimension $n-r$ acts by symmetries with orbits everywhere transversal
to $\Delta$. Then local integral curves of $\Delta$ can be found by quadratures.
 \end{theorem}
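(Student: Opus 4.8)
The plan is to reduce the non-holonomic case to the classical holonomic quadrature theorem by passing to a suitable quotient. First I would observe that $\Delta$ being completely non-holonomic with $G$ acting transversally to $\Delta$ of complementary dimension does \emph{not} immediately let us quotient by $G$, since the $G$-orbits need not be tangent to any integrable structure. Instead, the key idea is to work on the space of integral curves, or equivalently to use the solvable structure of $\G=\op{Lie}(G)$ to peel off one quadrature at a time. Write $\g=\G$ and choose a chain of ideals $0=\g_0\subset\g_1\subset\dots\subset\g_{n-r}=\g$ with $\dim\g_i=i$ and $[\g,\g_i]\subset\g_{i-1}$ (solvability). Let $X_1$ span $\g_1$; it is a symmetry of $\Delta$, transversal to it, and since $\g_1$ is an ideal the flow of $X_1$ normalizes the whole picture.

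The main step is an inductive reduction. Having the symmetry $X_1$ transversal to $\Delta$, I would form the local quotient $M_1=M/X_1$ (quotient by the flow lines of $X_1$), which has dimension $n-1$. Because $X_1$ is transversal to $\Delta$ and a symmetry of it, $\Delta$ descends to a rank-$r$ distribution $\Delta_1$ on $M_1$, still completely non-holonomic (the bracket-generating property survives a transversal symmetry quotient). The residual group $G/G_1$ (where $G_1=\exp\g_1$), which is solvable of dimension $n-r-1$, acts on $M_1$ by symmetries of $\Delta_1$ with orbits transversal to $\Delta_1$ — here one uses that $\g_1$ is an ideal, so the $G$-action descends. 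By induction on $n-r$ we obtain integral curves of $\Delta_1$ by quadratures; then reconstructing an integral curve of $\Delta$ upstairs from one of $\Delta_1$ amounts to solving a single first-order linear ODE along the curve for the $X_1$-coordinate, which is one additional quadrature. The base case $n-r=0$ is the statement that a completely non-holonomic distribution of corank $0$ is all of $TM$, so "integral curves" are arbitrary and there is nothing to integrate; the first genuinely substantive base case $n=r+1$ is integration of a single underdetermined ODE with one transversal symmetry, handled directly by straightening $X_1$ and using the symmetry to trivialize.

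The hard part will be making precise the claim that an integral curve of $\Delta$ is recovered from an integral curve of $\Delta_1$ by one quadrature, and in particular that one really gets \emph{curves} rather than higher-dimensional integral manifolds — since $\Delta$ may have no integral surfaces, the lift of a curve downstairs is a curve plus the fiber direction $X_1$, and one must check that requiring tangency to $\Delta$ selects a unique lift (up to the choice of initial point along the fiber) determined by a linear ODE. A secondary technical point is the passage from "completely non-holonomic" to the general holonomic-plus-non-holonomic situation promised in the surrounding text: once the completely non-holonomic case is done, write $\Delta_\infty$ for the bracket closure, note it is holonomic and $G$-invariant, apply the classical result of \cite{KLR} to integrate $\Delta_\infty$ by quadratures using the induced $G$-action on $M/\Delta_\infty$, and then restrict to a leaf of $\Delta_\infty$ where $\Delta$ is completely non-holonomic and the stabilizer subgroup acts transversally, reducing to the case already proved.
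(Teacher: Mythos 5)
Your route is genuinely different from the paper's, and its geometric core is sound, but the Lie-algebraic setup contains a real error. The paper's proof is much shorter: it quotients by the \emph{whole} group at once, $\pi:M\to L^r=M/G$, observes that $\pi_*$ maps $\Delta$ onto all of $TL$, picks an arbitrary curve $\gamma\subset L$ (this free choice is where the $r-1$ functional parameters of the integral curves live), and notes that $\Delta\cap T\pi^{-1}(\gamma)$ is a line field on the $(n-r+1)$-dimensional manifold $\pi^{-1}(\gamma)$, on which $G$ still acts transitively and transversally by symmetries; the classical Lie theorem for ODEs with a transversal solvable symmetry group then yields the integral curves by quadratures. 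So your opening worry about quotienting by $G$ is misplaced: the orbits of the locally free $G$-action do foliate $M$, the descent is harmless, and all of the content sits in the lifting step --- precisely the line-field-on-a-surface-with-one-transversal-symmetry picture that you correctly single out as the hard part. Your analysis of that step (the preimage of an integral curve of $\Delta_1$ is a surface carrying a line field preserved by $X_1$, hence integrable by one quadrature) is correct; calling the resulting ODE ``linear'' is inaccurate --- it is an ODE with a translational symmetry, i.e.\ $v'=f(t)$ after straightening $X_1$ --- but the conclusion stands.

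The gap is this: a chain with $[\mathfrak{g},\mathfrak{g}_i]\subset\mathfrak{g}_{i-1}$ exists only when $\mathfrak{g}$ is \emph{nilpotent} (it says the lower central series terminates), not for general solvable $\mathfrak{g}$. Even the weaker ingredient your induction actually needs --- a one-dimensional ideal $\mathfrak{g}_1\subset\mathfrak{g}$, so that $G/G_1$ descends to act on $M_1$ --- can fail over $\R$: for $\mathfrak{g}=\R\ltimes\R^2$ with $\R$ acting by rotations there is no one-dimensional ideal, so your induction cannot start. Standard repairs: peel from the top instead (quotient by the orbits of a codimension-one ideal containing $[\mathfrak{g},\mathfrak{g}]$, which always exists, and induct on that subgroup), allow two-dimensional abelian steps, or complexify --- or simply reduce to the classical Lie theorem as the paper does and let it absorb this bookkeeping. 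Your closing paragraph on $\Delta_\infty$ addresses a situation outside the statement (which assumes complete non-holonomicity); the paper treats that general case in the surrounding discussion via separate transversal group actions rather than via stabilizers of leaves.
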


 \begin{proof}
Denote $\pi:M\to L^r=M/G$ the local quotient by the orbits.
(the space of $G$-invariants). Notice that $\pi_*$ maps $\Delta$ to $TL$.

Choose a curve $\g\subset L$ and restrict the distribution $\Delta$ to $\pi^{-1}(\g)$.
This is a line field and $G$ acts transitively by symmetries on $\pi^{-1}(\g)$.
By the classical Sophus Lie theorem the integral curves of this line field can be found by
quadratures. Thus these restricted integral curves are parametrized by $n-r$ integration
constants in $\pi^{-1}(\g)$, while the curves $\g\subset L$ are parametrized
by $r-1$ function of 1 variable. The integral curves of $\Delta$ in $M$ are given
through these by quadratures.
 \end{proof}

In particular, for our case $r=2$ we get dependence on 1 function of 1 variable.
Thus for general class $\oo=1$ compatible PDE system we need three solvable Lie group
to integrate it in quadratures: one group $G_1$ of dimension equal to corang of the
characteristic space $\Pi$ to perform the reduction $(\E,\mathcal{C}_\E)\to(M,\Delta)$,
the second group $G_2$ of dimension equal to corang of $\Delta_\infty$ in $M$,
and finally the third group $G_3$ of dimension $\op{rank}(\Delta_\infty)-\op{rank}(\Delta)$
(all actions should be transversal).

 \begin{rk}\label{afterThm1}
A more general result is this: Consider a solvable Lie algebra $G$ acting as transversal
symmetries of $\Delta$ in $M$. Denote by $\bar\Delta$ in $\bar M$ the quotient distribution.
Then integral curves of $\Delta$ can be found from integral curves of $\bar\Delta$ by quadratures.
The number of involved integrals in the formula for the general integral curve
is equal to the length of the derived series of $G$.
 \end{rk}

Let us consider an example from \cite{Str} of a Monge equation $\E$
on $y=y(x)$, $z=z(x)$ with 3-dimensional solvable symmetry group:
 \begin{equation}\label{FSz}
z'=z^2+\psi(z)+(y''+y)^2.
 \end{equation}
The group $G=\op{Sym}(\E)$ is generated by the (prolongations of) vector fields
$\p_x,\cos x\,\p_y,\sin x\,\p_y$ on $J^0(\R,\R^2)=\R^3(x,y,z)$.

If one (naively) substitutes $y=h(x)$, then $z(x)$ satisfies a Riccati equation,
and so its solution cannot be found by quadratures (a similar problem occurs for
general $\oo=1$ class PDEs, so general reduction to ODEs from Theorem \ref{thA}
does not necessarily yields a solution).

The correct approach of Theorem \ref{Thm1} is to consider the quotient, i.e.
to pass to the space of $G$-invariants $\R^2(z,y''+y)$.
A curve in this space is given by an equation $y''+y=f(z)$.
Substituting this back into (\ref{FSz}) we find the autonomous first order equation
 $$
z'=z^2+\psi(z)+f(z)^2,
 $$
which is easily integrable in quadratures.

 \begin{rk}
The previous naive argument uncover as follows. The curve in the plane $L^2=\R^2(z,y''+y)$
is specified via a parameter $x$: $y''+y=\tilde h(x)$ and $z(x)$ is given by
$z'=z^2+\psi(z)+\tilde h(x)^2$. Since the last equation cannot be integrated in quadratures,
the initial data (curve in $L^2$, a point over it determined by 3 constants) is not given explicitly,
and so the result ceases to be given via an explicit formula.
 \end{rk}

\section{Integrable extensions and generalized symmetries}\label{S2}

Integrable extensions or coverings \cite{KV} are mappings of PDEs $\E\to\bar\E$
such that solutions of $\E$ are obtained from those of $\bar\E$ by solving ODEs.
For (underdetermined) ODEs the covering is just a submersion $\pi:(\tilde M,\tilde\Delta)\to(M,\Delta)$,
i.e. $d_x\pi:\tilde\Delta_x\to\Delta_{\pi(x)}$ is an isomorphism for any $x\in\tilde M$.

These coverings of systems of ODEs (or distributions) were studied in \cite{AK} as they are useful
in solving the system. Indeed a sequence of integrable extensions can decompose a given system
into a sequence of 1st order scalar ODEs.

It is easy to see that quotient by the Cauchy characteristic of class $\oo=1$ systems,
which is basic for Theorem \ref{thA}, commutes with integrable extension. Thus it is enough to
study integrable extensions of rank 2 distributions.
We will relate them to the symmetry approach of the previous section.

For instance, we can write the symmetry reduction of
Theorem \ref{Thm1} and Remark \ref{afterThm1} via integrable extensions. Let $\rho:\bar M\to\R(x)$
be function (projection) giving the independence condition. Write the equation for integral curves
of $\bar\Delta$ as $F[x,u]=0$, where the latter is an ordinary (nonlinear) differential operator and
both $F$ and $u$ are multi-dimensional.

Then provided $G$ has derived series $G=G_l\supset G_{l-1}\supset\dots \supset G_0=0$
with Abelian quotients $G_i/G_{i+1}=V_i$ we can choose a coordinate $v_i$ on $V_i$ and
have the equation for integral curves of $\Delta$ in this form:
 \begin{multline*}
F[x,u]=0,\ v_1'=H_1[x,u],\ v_2'=H_2[x,u,v_1],\ \dots,\\
v_{l-1}'=H_{l-1}[x,u,v_1,\dots,v_{l-2}],\ v_l'=H_l[x,u,v_1,\dots,v_{l-1}].
 \end{multline*}

\subsection{Existence of integral de-prolongations for $(2,5)$ distributions}\label{S21}

Due to existence of normal forms $(2,n)$ distributions have the structure of integrable extensions
for $n<5$. This holds true also in the first non-trivial case $n=5$, where such distributions
have moduli.

 \begin{theorem}
A regular 2-distribution $\Delta$ on a manifold $M^5$ admits local submersion onto
a 2-distribution in a 4-dimensional manifold $(\bar M^4,\bar\Delta)$.
 \end{theorem}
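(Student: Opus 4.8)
The plan is to exploit the local normal form for a generic rank 2 distribution $\Delta$ on $M^5$, due to Cartan, in which the $(2,5)$-growth condition means the weak derived flag has ranks $2, 3, 5$. First I would recall that such a $\Delta$ carries a canonical line subdistribution inside its first derived system $\Delta^{(1)}=\Delta+[\Delta,\Delta]$ (the rank 3 distribution): namely, the ``square'' or characteristic line $L\subset\Delta$ characterized by the property that $[L,\Delta^{(1)}]\subset\Delta^{(1)}$, i.e. the Cauchy characteristic space of $\Delta^{(1)}$. This line field $L$ is intrinsic (it is one of Cartan's basic invariants of the $(2,5)$ structure), and being a line field on $M^5$ it is automatically integrable. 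So I would take the local quotient $\pi:M^5\to \bar M^4=M/L$ by its one-dimensional leaves.

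The second step is to push $\Delta$ forward and check that $\bar\Delta:=d\pi(\Delta)$ is a well-defined rank 2 distribution on $\bar M^4$ and that $\pi:(M,\Delta)\to(\bar M,\bar\Delta)$ is an integrable extension, i.e. $d_x\pi$ restricts to an isomorphism $\Delta_x\to\bar\Delta_{\pi(x)}$. For this I need $L\not\subset$ (the span of vectors that project to zero transversally), which is immediate since $L\subset\Delta$ has rank 1 and $\ker d\pi = L$ along the leaves; the subtle point is that $\Delta/L$ has constant rank 1 fibers, so $d\pi(\Delta)$ is rank $2-?$ — here I must be careful: actually $\ker d\pi$ is tangent to the leaves of $L$, which is exactly $L$, so $d\pi(\Delta)$ has rank $2-1=1$ unless I instead quotient and note that $\Delta$ maps isomorphically because sections of $L$ are symmetries. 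The correct statement is that, because $L$ consists of Cauchy characteristics of $\Delta^{(1)}$ (not of $\Delta$ itself), $\Delta$ is not $L$-projectable directly; rather one shows $\Delta^{(1)}/L$ descends to a rank 2 distribution on $\bar M^4$, and the preimage construction recovers $\Delta$. I would therefore push forward $\Delta^{(1)}$: the flow of $L$ preserves $\Delta^{(1)}$ (Cauchy characteristic property), so $\bar\Delta:=d\pi(\Delta^{(1)})$ is a well-defined rank 2 distribution on $\bar M^4$, and $\pi^{-1}_*\bar\Delta$ together with the extra structure recovers $\Delta$ — this is precisely the ``integrable extension'' (de-prolongation) relation.

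The third step is to verify genericity/regularity descends: on $\bar M^4$ a rank 2 distribution generically has growth $2,3,4$ (an Engel-type or $(2,4)$ distribution), which by the normal forms cited for $n<5$ again has the structure of an integrable extension, so the de-prolongation process continues — but for the present theorem I only need the single step to $\bar M^4$. I would close by identifying the fibers of $\pi^{-1}_*$: lifting an integral curve of $\bar\Delta$ back up $\pi$ and integrating along $L$ produces integral curves of $\Delta$, solving one extra ODE, which is exactly the defining property of an integrable extension in the sense of \cite{AK,KV}.

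The main obstacle I anticipate is the bookkeeping in the second step: pinning down \emph{which} canonical line field to quotient by (the Cauchy characteristic of $\Delta^{(1)}$, equivalently Cartan's root line) and checking cleanly that after quotienting one lands on a \emph{rank 2} distribution rather than rank 1 or rank 3 — this requires tracking the weak derived flag through the submersion and using that $L$ kills exactly one dimension of $\Delta^{(1)}$ while $\Delta^{(1)}/L$ and $\Delta$ contain the same ``square'' data. The existence of moduli for $(2,5)$ distributions is not an obstruction here, since the line field $L$ and hence the de-prolongation are defined regardless of the value of Cartan's fundamental quartic invariant.
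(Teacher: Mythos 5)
There is a genuine gap, and it sits at the very first step. For a \emph{generic} regular rank 2 distribution on $M^5$ the growth vector is $(2,3,5)$, and in that case the Cauchy characteristic space of the derived system $\Delta^{(1)}=\Delta_2$ is \emph{zero}: writing $\Delta=\langle e_1,e_2\rangle$, $e_3=[e_1,e_2]$, $e_4=[e_1,e_3]$, $e_5=[e_2,e_3]$, a vector $X=ae_1+be_2+ce_3$ with $[X,\Delta_2]\subset\Delta_2$ is forced to have $a=b=c=0$. The line field you propose to quotient by exists precisely in the degenerate situation where the growth vector is $(2,3,4,\dots)$ (the Goursat/Engel-type case handled by Cartan's de-prolongation, case I of \S\ref{S32}), which is exactly the case this theorem is \emph{not} about --- the point of \S\ref{S21} is that the statement survives into the first case with moduli. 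Moreover, even granting such an $L\subset\Delta$, quotienting by it yields $d\pi(\Delta)$ of rank 1 and lands you in the prolongation picture $\Delta=\mathbb{P}(\bar\Delta)$, not in the covering asserted by the theorem. You notice this tension and switch to pushing forward $\Delta^{(1)}$, but that changes the statement: the theorem (read against \S\ref{S2} and \S\ref{S22}) asserts an \emph{integrable extension}, i.e.\ a submersion whose vertical line field $V=\ker d\pi$ is \emph{transversal} to $\Delta$, satisfies $[V,\Delta]\subset V+\Delta$, and for which $d\pi:\Delta_x\to\bar\Delta_{\pi(x)}$ is an isomorphism.

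The actual content of the proof is the construction of that transversal $V$, and it is not free: the paper obtains it from Goursat's normal form, showing every regular $(2,5)$ distribution is the canonical distribution of a Monge equation $v'=f(x,u,u',u'',v)$, for which $V=\langle\p_v\rangle$ and the submersion forgets $v$. The existence argument there has two nontrivial steps your proposal has no substitute for: first, one finds a distinguished direction $\zeta\in\Delta$ by solving $\Theta_1(\zeta,\eta)=0$, which under a change of frame transforms as a binary \emph{cubic} and therefore always has a real root on $\R P^1$ (this is where $n=5$ is used; for $n>5$ the analogous problem is obstructed, cf.\ \S\ref{S22}); second, after straightening $\zeta$ one integrates a third-order linear ODE $\eta'''=a_2\eta''+a_1\eta'+a_0\eta$ for the moving frame to produce the coordinates $(x,u,u_1,u_2,v)$. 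Your closing remark that the construction works ``regardless of the value of Cartan's fundamental quartic invariant'' is true of the paper's argument but is not something your construction delivers, since the object you quotient by does not exist in the generic case. As written, the proposal does not prove the theorem.
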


The claim follows from (is equivalent to) a result due to Goursat:

 \begin{theorem}[\cite{G$_3$}, \S 76]
A regular rank 2 distribution in a 5-dimensional manifold can be locally represented
as the canonical distribution of the Monge equation $\E:v'=f(x,u,u',u'',v)$.
 \end{theorem}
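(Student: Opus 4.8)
The plan is to build Monge coordinates $(x,u,p,q,v)$ with $p=u'$, $q=u''$ directly from the derived flag of $\Delta$, using that ``regular'' here means the generic growth vector $(2,3,5)$ (the case carrying moduli; the degenerate Goursat-flag case $(2,3,4,5)$ is classical and simpler). I work throughout with the annihilator $I=\op{Ann}(\Delta)$, a Pfaffian system of rank $3$. Regularity yields the derived flag $\Delta\subset\Delta^2\subset\Delta^3=TM$ of ranks $2,3,5$, dually $I\supset I^{(1)}\supset I^{(2)}=0$ of ranks $3,2,0$, where $I^{(1)}=\op{Ann}(\Delta^2)$ is the first derived system ($\theta\in I$ with $d\theta\equiv0\bmod I$).

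The heart of the proof is to extract a contact structure from the pencil $I^{(1)}=\langle\omega^1,\omega^2\rangle$. For $\theta=\lambda\omega^1+\mu\omega^2$ the Darboux class is governed by the cubic form $[\lambda:\mu]\mapsto\theta\wedge d\theta\wedge d\theta$ on $M^5$, and I would show that regularity singles out a line $\langle\omega^0\rangle\subset I^{(1)}$ of class exactly $3$: one has $\omega^0\wedge d\omega^0\neq0$ while $\omega^0\wedge(d\omega^0)^2=0$. Equivalently $\omega^0$ has a rank $2$ Cauchy characteristic space, whose (integrable) quotient $\pi:M^5\to N^3$ makes $\omega^0$ a genuine contact form on $N^3$. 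By the classical Darboux theorem I then fix coordinates $(x,u,p)$ on $N^3$ with $\omega^0=du-p\,dx$; pulled back, $x,u,p$ become functions on $M^5$ with independent differentials.

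It remains to produce $q$ and $v$. Since $\omega^0\in I^{(1)}$, the form $d\omega^0=dx\wedge dp$ vanishes on $\Delta$, so $dp|_\Delta$ is proportional to $dx|_\Delta\neq0$; the ratio defines a function $q$ with $dp-q\,dx\in I$. Together with $\omega^0$ this produces the two jet-contact forms $du-p\,dx$ and $dp-q\,dx$, so $(x,u,p,q)$ is a submersion $M^5\to J^2(\R,\R)$ intertwining $\Delta$ with the canonical Engel distribution --- precisely the de-prolongation asserted just above. Regularity (equivalently, the nondegeneracy $f_{qq}\neq0$ below) guarantees that $dx,du,dp,dq$ are independent and that the line $Z=\Delta\cap\ker dx$ satisfies $Z(q)\neq0$; choosing the fifth coordinate $v$ to be a first integral of $Z$ independent of $x,u,p$ arranges $Z=\langle\partial_q\rangle$. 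Reducing the last generator of $I$ modulo $du-p\,dx$ and $dp-q\,dx$ and normalizing then gives a form $dv-f\,dx\in I$ with $f=f(x,u,p,q,v)$; annihilation of the generator $D\in\Delta$ with $dx(D)=1$ reads $v'=f(x,u,u',u'',v)$, and $f_{qq}\neq0$ is exactly the condition that the growth be $(2,3,5)$, closing the loop.

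The main obstacle is the middle step: proving that the pencil $I^{(1)}$ contains a form of Darboux class exactly $3$ and that its characteristic quotient is a smooth contact $3$-fold with constant class. This is the only non-formal point and the place where the $(2,3,5)$ hypothesis is indispensable --- the cubic $\theta\wedge(d\theta)^2$ and the constancy of rank are controlled entirely by regularity. Everything downstream (the prolongation defining $q$, the completion to $v$, and the reading-off of $f$) is then routine bookkeeping via the Frobenius and Darboux theorems.
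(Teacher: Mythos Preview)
Your approach is correct in outline but genuinely different from the paper's, and in fact closer to Goursat's original: the paper explicitly announces that it is giving ``an alternative proof \dots\ using vector fields approach instead of EDS methods''. You work dually with the Pfaffian system $I=\op{Ann}(\Delta)$ and look for a distinguished \emph{form} $\omega^0\in I^{(1)}$ of Darboux class~3, then pass to the contact quotient $N^3=M/\{\text{Cauchy chars of }\omega^0\}$ and rebuild $q,v$ by prolongation. The paper instead looks for a distinguished \emph{vector} $\zeta\in\Delta$ by solving the cubic $\Theta_1(\zeta,\eta)=0$ built from iterated brackets; it then straightens $\zeta=\partial_{u_2}$, interprets $\eta=\Delta/\zeta$ as a $u_2$-parametrised field on $\R^4$, observes that $\Theta_1=0$ becomes the third-order linear relation $\eta\wedge\eta'\wedge\eta''\wedge\eta'''=0$, normalises to $\eta=\xi_0+u_2\xi_1+f\partial_v$, and only at the end invokes the contact normal form for $\langle\xi_0,\xi_1\rangle$ on $\R^3$. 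So both proofs hinge on solving a cubic --- yours on $\mathbb{P}I^{(1)}$, the paper's on $\mathbb{P}\Delta$ --- and then reduce to Darboux, but the geometric r\^oles are exchanged: you quotient twice (by rank~2 Cauchy characteristics) and then lift, while the paper quotients once (by $\zeta$), analyses an ODE in the fibre parameter, and quotients again.

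One point in your sketch deserves more care. Your cubic $[\lambda:\mu]\mapsto\theta\wedge(d\theta)^2$ is not a well-defined form on $I^{(1)}$ pointwise, because $d\theta$ at $p$ depends on the $1$-jet of the section $\theta$, not merely on $\theta(p)$; and even after fixing a frame, the pointwise real root need not assemble into a section of class exactly~3 (with non-constant $\lambda,\mu$ the extra $d\lambda\wedge\omega^1+d\mu\wedge\omega^2$ terms spoil the conclusion). What is true --- and parallel to the paper's treatment of $\Theta_1$, where a derivative term $\sigma\,\Theta_0$ also appears --- is that $\theta\wedge(d\theta)^2=0$ is a quasilinear first-order scalar PDE on $\mu=\mu/\lambda$, solvable locally by characteristics; alternatively one can show the cubic always carries a double root (in Monge frame it is $\mu^2(\lambda f_v-\mu f_u)$ up to a nonzero factor) and track that. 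Your phrase ``singles out a line'' is also too strong: for the Hilbert--Cartan model the cubic vanishes identically and every $\theta\in I^{(1)}$ has class~3. Once $\omega^0$ is in hand your downstream steps are fine; in particular $Z(q)\neq0$ does follow from growth $(2,3,5)$, since $Z(q)=0$ forces $\omega^0\in I^{(2)}=0$.
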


Indeed, the distribution of this equation $\E\subset J^{1,2}(\R,\R^2)$ is
 \begin{equation}\label{DeltaforMonge}
\Delta=\langle\p_x+u_1\,\p_u+u_2\,\p_{u_1}+f\,\p_v,\p_{u_2}\rangle,
 \end{equation}
which has the structure of integrable extension over $J^2(\R,\R)$, equipped with the canonical
Cartan distribution $\langle\p_x+u_1\,\p_u+u_2\,\p_{u_1},\p_{u_2}\rangle$; the projection $\pi:\E\to J^2$
is $(x,u,u_1,u_2,v)\mapsto(x,u,u_1,u_2)$.

For completeness we give an alternative proof of Goursat's theorem
(using vector fields approach instead of EDS methods).

 \begin{proof}
Let the flag of the distribution\footnote{By the commutator of two distributions we mean the
distribution generated by the commutator of sections, e.g. $\Gamma(\Delta_2)=[\Gamma(\Delta),
\Gamma(\Delta_1)]$ etc.} be $\Delta_1=\Delta$, $\Delta_2=[\Delta,\Delta_1]$,
$\Delta_3=[\Delta,\Delta_2]=TM$ (we consider the general situation; in the other cases the
distributions have normal forms and the statement follows).

Consider the maps $\Upsilon:\Gamma(\Delta)\times\Gamma(\Delta)\to\Gamma(\Lambda^4TM)$
and $\Theta_i:\Gamma(\Delta)\times\Gamma(\Delta)\to\Gamma(\Lambda^5TM)$ given by
 \begin{gather*}
\Upsilon(\zeta,\eta)=\zeta\we\eta\we[\zeta,\eta]\we[\zeta,[\zeta,\eta]],\\
\Theta_0(\zeta,\eta)=\Upsilon\we[\eta,[\zeta,\eta]],\quad
\Theta_1(\zeta,\eta)=\Upsilon\we[\zeta,[\zeta,[\zeta,\eta]]],\\
\Theta_2(\zeta,\eta)=\Upsilon\we[\zeta,[\eta,[\zeta,\eta]]],\quad
\Theta_3(\zeta,\eta)=\Upsilon\we[\eta,[\eta,[\zeta,\eta]]].
 \end{gather*}

A change of frame $\tilde\zeta=a\zeta+b\eta$, $\tilde\eta=c\zeta+d\eta$ induces the changes:
 $$
\Theta_0(\tilde\zeta,\tilde\eta)=\delta^5 \Theta_0(\zeta,\eta),\quad
\delta=\begin{vmatrix}
a & b \\ c & d\end{vmatrix}
 $$
 \begin{multline*}
\delta^{-4}\Theta_1(\tilde\zeta,\tilde\eta)=
a^3\,\Theta_1(\zeta,\eta)+a^2b\,(2\Theta_2(\zeta,\eta)+\Theta_3(\eta,\zeta))\\
+ab^2\,(2\Theta_2(\eta,\zeta)+\Theta_3(\zeta,\eta))+b^3\,\Theta_1(\eta,\zeta)
+\sigma\Theta_0(\eta,\zeta).
 \end{multline*}
where $\sigma=a\cdot(a\eta+b\zeta)(b)-b\cdot(a\eta+b\zeta)(a)$.
This implies existence of a solution $\frac ab\in C^\infty(M,\R P^1)$ to $\Theta_1(\zeta,\eta)=0$.

Let us straighten $\zeta=\p_{u_2}$ in a local chart $\R^5\hookrightarrow M$,
and denote the quotient by $\R^4=\R^5/\zeta$ (i.e. $u_2=\op{const}$).
Then the distribution becomes a $u_2$-dependent vector field $\eta=\Delta/\zeta$ in $\R^4$.
The Lie derivative $L_\zeta$ corresponds to the derivative by $u_2$, which we denote by the prime.

Condition $\Theta_1(\zeta,\eta)=0$ reads $\eta\we\eta'\we\eta''\we\eta'''=0$, and we can assume
the highest derivative can be resolved:
 $$
\eta'''=a_2\eta''+a_1\eta'+a_0\eta
 $$
By reparametrization of time $u_2$ and scaling of $\eta$ we can achieve $a_0=a_1=0$ (in contrast
the Laguerre-Forsyth canonical form). Then the equation is $\eta'''=a_2\eta''$,
and the solution is $\eta=\xi_0+u_2\xi_1+f\p_v$, where $f_{u_2u_2}\ne0$, $\eta''\|\p_v$ and
$\xi_0,\xi_1$ are $u_2$-independent fields on $\R^3=\R^4/\p_v$.

Now in our general case the distribution $\langle\xi_0,\xi_1\rangle$ in $\R^3$ is contact, so in
the Darboux coordinates $\xi_0=\p_x+u_1\p_u$, $\xi_1=\p_{u_1}$. Thus we obtain local coordinates
on $M$ such that $\Delta$ has form (\ref{DeltaforMonge}).
 \end{proof}

\subsection{Non-existence of integral de-prolongations for $(2,n)$ distributions with $n>5$}\label{S22}

Dimensional count: generic rank 2 distribution in $M^n$ depends on $2(n-2)-n=n-4$ functions of $n$ variables
(quotient of sections of $(2,n)$-Grassmanian by the pseudogroup of local diffeomorphisms), while
integrable extension depends on 1 function of $n$ variables (in both cases: and some number of functions
of fewer variables). Thus for $n>5$ there are obstructions to existence of the structure of integrable
extension over a lower-dimensional manifold.

These obstructions are important relative differential invariants of the distribution.
For example, in dimension 6 there are 2 relative invariants vanishing of which characterizes
possibility to represent the distribution as the Monge equation $v'=f(x,u,u',u'',u''',v)$).

It is interesting to notice that for $n=5$ we get seemingly determined system (the same functional dimension).
And indeed, the one-dimensional distribution $V$ has the property of de-prolongation
(the vertical distribution of the projection $\pi$) iff $[V,\Delta]\subset V+\Delta$.
This writes as 4 equations on 4 functions specifying $V$ (these latter can be taken as the 1st integrals, but then
the system has order 2; it is better to write $V$ via a generating vector field $\p_v+F_1\p_x+F_2\p_u+F_3\p_p+F_4\p_q$
and take the components $F_i$ as the unknowns). This $4\times4$ system is not however determined: it has every covector
characteristic (direct calculation or the following observation -- in the normal form of the previous section
the totality of integrable de-prolongations has functional moduli - the general solution depends
on a function of $5$ variables).

 \begin{rk}
It is also interesting to try de-prolongation by rank 2 foliation, which indeed exists as a generic
(2,4) distribution has Engel normal form, and so integrally de-prolongs to the contact (2,3) distribution.
The conditions for existence of such rank 2 distribution $V$ are:
 \begin{equation}\label{VgsD}
[V,V]\subset V,\  [V,\Delta]\subset V+\Delta.
 \end{equation}
This is a system of 4 equations on 3 unknowns (1st integrals of $V$), but it is not overdetermined: again
all covectors are characteristic!
 \end{rk}

\subsection{Generalized symmetries}\label{S23}

A symmetry of the distribution $\Delta$ is a vector field $\xi$ such that $\mathcal{L}_\xi(\Delta)=\Delta$.
More generally a space $\mathcal{G}$ of vector fields is a Lie symmetry algebra iff
 $$
[\mathcal{G},\mathcal{G}]\subset\mathcal{G},\ [\mathcal{G},\Delta]\subset\Delta.
 $$

 \begin{rk}
Let us recall that a collection of differential operators $\langle F_i\rangle$ form a symmetry algebra for
the PDE system $\E=\langle H_j=0\rangle$ iff
 $$
\{F_\a,F_\b\}=0\,\op{mod}\langle F_i\rangle,\ \ \{F_\a,H_\b\}=0\,\op{mod}\langle H_j\rangle,
 $$
where $\{,\}$ is the Jacobi bracket (we write the condition for simplicity in the case of scalar or
square matrix equations), see \cite{KLV}.

If we are interested in compatibility of the systems $\langle F_i\rangle$ and
$\langle H_j\rangle$, then the last condition changes to more general
 $$
\{F_\a,H_\b\}=0\,\op{mod}\langle F_i,H_j\rangle,
 $$
see \cite{KL}. Such $F$ are called generalized symmetries, conditional symmetries or auxiliary integrals.
 \end{rk}

Basing on this remark we can treat distributions $V$ satisfying condition (\ref{VgsD})
as generalized symmetries. It allows the following symmetry reduction: if $L\subset M$
is an integral curve of $\Delta$, then in the union of $V$-leaves meeting $L$
the integral curves of $\Delta$ can be found via lower-dimensional determined ODE
(for integral manifolds the corresponding criterion is a bit more complicated). If,
in addition, $V$ is obtained as the tangent distribution of solvable Lie algebra sheaf,
the solutions can be found in quadratures.
The generalized symmetries are more common than the classical ones.

 \smallskip

{\bf Example.} Consider the symmetries of the Engel distribution, which is the Cartan (higher contact)
distribution on $J^2(\R,\R)=\R^4(x,y,y_1,y_2)$. In canonical coordinates it is
$\Delta^2=\langle\xi_1=\p_x+y_1\p_y+y_2\p_{y_1},\xi_2=\p_{y_2}\rangle$.

By Lie-B\"acklund theorem the symmetries are lifts of contact fields on $J^1(\R,\R)$,
and so are defined by 1 function of 3 arguments.

The generalized symmetries $\eta=\p_y+\l_1\p_x+\l_2\p_{y_1}+\l_3\p_{y_2}$
(here unlike for symmetries we can normalize one of the coefficients by scaling)
are defined by $[\xi_i\,\eta]=0\,\op{mod}\Delta+\langle\eta\rangle$ which is equivalent to
 \begin{multline*}
\l_{2y_2}=\frac{y_1\l_2-y_2}{y_1\l_1-1}\l_{1y_2},\
\l_3=\frac{y_1\l_2-y_2}{1-y_1\l_1}\xi_1(\l_1)+\xi_1(\l_2)+\l_2\frac{\l_2-y_2\l_1}{1-y_1\l_1}.
 \end{multline*}
So the generalized symmetries depend on 1 function of 4 arguments $\l_1$.

 \smallskip

It is often the case that a system (distribution) has no symmetries, but it admits
generalized symmetries that can (partially) integrate $\Delta$.

\section{Integration of class $\oo=1$ systems}\label{S3}

In this section we split the totality of $\oo=1$ systems into classes, and
discuss transformations between them as a method of integration.
$r=\dim H^{*,1}(\E)$ will be the total amount of PDEs in the system.\footnote{Starting
from this Section we restrict to base dimension $n=2$. Familiarity with
the Spencer cohomology $H^{i,j}(\E)$ \cite{S} is not crucial.}

\subsection{Type and complexity}\label{S31}

We introduce the following rule for a choice of generators of the system $\E$ of class $\oo=1$.
Consider the orders of the PDEs in the system: $k_\text{min}=k_1\le\dots\le k_r=k_\text{max}$,
which are taken with multiplicities $m_i=\{\# j: k_j=i\}=\dim H^{i-1,1}(\E)$.

So the system $\E$ is given by $m_{k_1}$ equations
$F_{1,1},\dots,F_{1,m_1}$ of order $k_1$, \dots, $m_{k_s}$ equations
$F_{s,1},\dots,F_{s,m_s}$ of order $k_s=k_\text{max}$ ($s=r-m_{k_r}+1$).

We write $\E$ symbolically as $\sum\limits_{i=1}^rE_{k_i}=\sum m_iE_i$,
and call the latter the type of $\E$.
See \cite{K} for the table of class $\oo=1$ systems of order $k_\text{\rm max}\le 5$
(this table works equally well for general non-linear systems).

Let $g_i$ denote the symbols of $\E$. Starting from some jet-level $t$
the dimensions of these subspaces stabilize: $\dim g_i=1$ for $i\ge t$.
This is equivalent to involutivity of the prolongation $\E^{(t-k_\text{max})}$.

 \begin{dfn}
Complexity of $\E$ is the number $\varkappa=\sum_{i=0}^\infty(\dim g_i-1)$.
 \end{dfn}

This number measures the amount of Cauchy data needed to specify a solution.
It gives a partial order on the totality of class $\oo=1$ systems. All our reductions will
decrease the order.

By definition all systems of class $\oo=0$ (for ODEs the relevant complexity is
the dimension of the solutions space) are taken to be of lower complexity than the
systems of class $\oo=1$.

 \begin{lem}\label{Lem2}
Denote by $\hat\E$ the equation prolonged to the jet-level $t=\min\{i:\dim g_i=1\}$, where it is involutive.
Then $\dim\hat\E=\varkappa+t+3$.
 \end{lem}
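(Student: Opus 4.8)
The plan is to track the dimension of the prolongations $\E^{(j)}$ of the system as we move up the jet tower, using the filtration of $\E$ by the projections $\pi_{i,i-1}$ restricted to $\hat\E$. Write $\E$ after prolongation to level $t$ as a submanifold $\hat\E = \E^{(t-k_{\max})}\subset J^t(W,N)$ with $n=2$, $m=1$. The fibers of $\pi_{i,i-1}:J^i\to J^{i-1}$ over a point are $S^iT^*W\ot TN$, which for $n=2$, $m=1$ has dimension $i+1$. The key identity is that, level by level, $\dim\hat\E$ equals $\dim J^0$ plus the sum over $i\ge 1$ of the dimensions of the fibers of $\pi_{i,i-1}|_{\hat\E}$, and those fiber dimensions are exactly the symbol dimensions $\dim g_i$ (this uses that $\hat\E$ is involutive at level $t$, so it is a genuine fibered submanifold and the symbol captures the full fiber increment). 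Thus
$$
\dim\hat\E=\dim J^0(W,N)+\sum_{i=1}^{t}\dim g_i = 3+\sum_{i=1}^{t}\dim g_i,
$$
since $\dim J^0(\R^2,\R)=3$.

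**Reorganizing the sum via complexity.** Now I split the sum. For $i\ge t$ we have $\dim g_i=1$; at level $i=t$ the symbol is already $1$-dimensional by definition of $t$, and for $1\le i<t$ we have $\dim g_i\ge 2$ (otherwise $t$ would not be minimal — here one must be a little careful that $\dim g_i$ does not drop below and come back, but regularity of the characteristic variety and the fact that the symbols of a prolonged system are monotone in the relevant sense rules this out; alternatively $\varkappa$ is defined precisely as $\sum_{i\ge 0}(\dim g_i-1)$ so no monotonicity is even needed for the bookkeeping). Writing $\dim g_i=(\dim g_i-1)+1$ term by term:
$$
\sum_{i=1}^{t}\dim g_i=\sum_{i=1}^{t}(\dim g_i-1)+t=\Bigl(\sum_{i=0}^{\infty}(\dim g_i-1)\Bigr)-(\dim g_0-1)+t,
$$
where I have extended the sum to $\infty$ (legitimate since $\dim g_i-1=0$ for $i\ge t$) and to $i=0$ (correcting by the $i=0$ term). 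Since $g_0=TN=\R$ we have $\dim g_0-1=0$, so the correction term vanishes, and the remaining sum is exactly $\varkappa$. Hence $\dim\hat\E=3+\varkappa+t$, which is the claim.

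**Main obstacle.** The routine part is the dimension-counting identity $\dim\hat\E=3+\sum_{i\le t}\dim g_i$; the genuinely delicate point is justifying that the fiber of $\pi_{i,i-1}|_{\hat\E}$ has dimension exactly $\dim g_i$ uniformly over a neighborhood, i.e. that $\hat\E\to\pi_{i,i-1}(\hat\E)$ is a submersion with fibers affine subspaces of dimension $\dim g_i$ at every intermediate level $i\le t$. This is where involutivity (equivalently, the stabilization of the symbol and formal integrability assumed in \S\ref{S11}) and regularity enter: for a regular involutive system the Cartan--Kähler/Goldschmidt machinery guarantees that each prolongation is a fibered submanifold of the next and the fiber increment is governed by the symbol. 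I would cite the formal theory (\cite{S}, \cite{KLV}) for this and present the algebra above as the actual proof. One should also double-check the base case — at level $i=1$ the symbol $g_1$ could be all of $S^1T^*W\ot TN$ (dimension $2$) if $\E$ has no first-order equations, and the formula must still hold; it does, since then the $i=1$ contribution is $\dim g_1=2=1+1$ and the ``$-1$'' is absorbed into $\varkappa$.
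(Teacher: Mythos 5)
Your proof is correct and is essentially the paper's own argument: the paper's one-line proof is precisely the identity $\dim\hat\E=2+\sum_{i=0}^{t}\dim g_i$ (base dimension plus symbol dimensions up the jet tower), from which the claim follows by the same bookkeeping with $\varkappa$ that you carry out. Your extra care about the fibered structure of the prolongations and the $g_0$ term is a reasonable elaboration of what the paper leaves implicit.
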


 \begin{proof}
Since the base is 2-dimensional, we get $\dim\hat\E=2+\sum_{i=0}^t\dim g_i$, whence the claim.
 \end{proof}

Denote the Cartan distribution of $\E$ by $\CC_\E$.
We will assume that the system is prolonged to the level it is involutive, then $\dim\CC_\E=3$.

According to Lemma \ref{LemChar} the distribution $\CC_\E$ vcontains a
Cauchy characteristic field $\xi$ (the 1-dimensional distribution generated by it is unique).
Denote by $(M,\Delta)$ be the (local) quotient by $\xi$.
This $\Delta$ is a rank 2 distribution describing the internal geometry of $\E$.
By Lemma \ref{Lem2} the manifold $M$ has dimension $\mu=\varkappa+t+2$.

\subsection{Derived flags of a rank 2 distribution}\label{S32}

Consider the strong derived flag of $\Delta$ defined by $\nabla_1=\Delta$,
$[\Gamma(\nabla_{i+1})=[\Gamma(\nabla_i),\Gamma(\nabla_i)]$ (where $\Gamma(\Delta)$
denotes the module of sections of the distribution $\Delta$).
The strong growth vector is the finite sequence of dimensions $(\dim\nabla_i)_{i=1}^\t$,
where $\t$ is the stabilization level (in the regularity assumptions, we adopt, all the ranks are constant).

The weak derived flag is given by $\Delta_1=\Delta$, $\Gamma(\Delta_{i+1})=[\Gamma(\Delta_i),\Gamma(\Delta)]$.
Notations $\p^{i-1}\!\Delta=\Delta_i$ are also used. The following cases are possible.

\medskip

{\bf I.} The growth vector is $(2,3,4,\dots)$. In this case by Cartan theorem \cite{C$_2$,AK}
the system can be de-prolonged\footnote{In \cite{AK} the growth vector of the weak derived flag
was considered. However this makes no difference at the first 3 elements of the
sequence $(2,3,x,\dots)$, where $x=\dim\Delta_3$ can be $3,4,5$.},
i.e. there exists another manifold $\bar M$
of dimension $\bar\mu=\mu-1$ equipped with rank 2 distribution
$\bar\Delta$ such that $\Delta=\mathbb{P}(\bar\Delta)$ is the prolongation.

The symmetries of $\Delta$ are preserved under passage to $\bar\Delta$, and the solutions
are mapped forward in such a way that to any solution of $\bar\Delta$ there corresponds
a 1-dimensional family of integral curves of $\Delta$.

Thus passage to de-prolongation is a nice reduction of the system, for which the complexity
$\varkappa$ (it exists on both ODE and PDE levels) decreases. For linear class $\oo=1$ systems this
corresponds to the (generalized) Laplace transformation, see \cite{K} and the next section.

\medskip

{\bf II.} The distribution $\Delta$ is not completely non-holonomic, i.e. $\nabla_\t\ne TM$.
In this case\footnote{Again here it makes no difference if we consider weak or strong
derived flag, only the length $\t$ can change.} there are $p=\mu-\op{rank}\nabla_\t$
first integrals $I_1,\dots,I_p$ that pull-back to first integrals of
the system $\E$. We can fix the values of $I_j$ and reduce the complexity of the system.

For linear systems existence of such integrals means that the sequence of Laplace transformations
does not reduce $\E$ to $E_1$ but stops on a finite type (class $\oo=0$) system \cite{K}.
For non-linear systems the relative invariants that control existence of intermediate integrals
can be calculated as generalized Laplace invariants of the linearization.

\medskip

{\bf III.} The general case: the distribution is totally non-holonomic and not de-prolongable.
Thus the growth vector is $(2,3,5,\dots,\mu)$.

To find integral curves of $\Delta$ one can evolve Theorem \ref{Thm1}, or use integrable extension
idea of Section \ref{S2} to decrease the complexity. Of course, due to calculations in
Section \ref{S22} a generic rank 2 distribution on high-dimensional manifold has no integrable extensions,
but distributions with symmetries do have such extensions. Indeed the symmetry reduction gives the projection
(with fibers almost everywhere transversal to $\Delta$), that's why we can treat integrable extensions
as generalized symmetries. Thus search of integrable extension is an integration method.

Notice also that the quotient by Cauchy characteristic is not interchangable with
de-prolongations and restrictions to the level of first integrals, and it must be performed first.
But sometimes the system needs to be prolonged for this.

\medskip

{\bf Example:} Consider a compatible system $\E\subset J^3(M)$ of type $2E_3$:
$u_{xxx}=F,u_{xyy}=G$, with $F,G\in C^\infty(J^2M)$).
The system is not involutive on the level of 3rd jets\footnote{It has non-zero Spencer cohomology
$H^{3,2}(\E)=\R$, and the symbol is not stable: $\dim g_3=2$, $\dim g_{3+i}=1$.}, so if we do not prolong the system, then
the weak derived flag of the Cartan distribution $\mathcal{C}_\E$ is
$(4,7,9,10)$. In addition $\mathcal{C}_\E$ has no Cauchy characteristics, while
its derived $\partial\,\mathcal{C}_\E$ has 3 Cauchy characteristics,
so that the pattern is wrong.

The prolonged system $\E^{(1)}\subset J^4(M)$ is involutive and the reduction of Theorem \ref{thA}
works -- there is 1 Cauchy characteristic for the original distribution and one more for the
derived.

Consider for instance the system with $F=\frac14u_{yyy}^4$, $G=\frac12u_{yyy}^2$.
The weak derived flag of the reduced (by Cauchy characteristic) system has growth $(2,3,4,5,6,7,8,9)$,
while the strong growth vector is $(2,3,4,6,9)$. Thus there is one intermediate integral
$u_{xxy}-\frac13u_{yyy}^3=c$, and after de-prolongation both growth vectors are
$(2,3,5,8)$ -- the corresponding graded nilpotent Lie algebra \cite{T,AK} is free truncated.

\smallskip

This shows importance of prolongation of the system $\E$ to the jet-level with $\dim g_k=1$.

\subsection{Internal geometry of linear systems}\label{S33}

Linear compatible PDE systems of class $\oo=1$ were studied in \cite{K},
which we briefly summarize.

It was shown in that paper that such systems $\E$ (with dependent variable $u$)
can be integrated via generalized Laplace transformation, which is a first order differential
operator $L:u\mapsto v=Xu$, with $X$ having the same symbol as the characteristic
vector field.

Denote the system we obtain on the variable $v$ by $\ti\E$. It is also linear and compatible.
Denote the inverse operator by $L^{-1}:v\mapsto u$.
As proven in \cite{K} only three different situations are possible:

 \begin{enumerate}
 \item $\ti\E$ has class $\oo=1$ and $L^{-1}$ is a differential operator.
 \item $\ti\E$ has class $\oo=1$ but $L^{-1}$ is given by a finite type system.
 \item $\ti\E$ has class $\oo=0$ and $L^{-1}$ is an integral operator.
 \end{enumerate}

Case (1) is generic. If the itinerary of the transformations for $\E$ meets only
such equations, then Laplace transformation provides complete integration of the
PDE system $\E$.

Moreover, under generalized Laplace transformation the complexity strictly decreases.
Generically it decreases only by 1: $\varkappa\mapsto\varkappa-1$.

These results were obtained using the external geometry of $\E$.
Let us reformulate them in the internal language.

 \begin{prop}
A generalized Laplace transformation for $\oo=1$ li\-near systems is composed from the following
maps in subsequent stages: some number of prolongations, a diffeomorphism, some number of
de-prolon\-gations. For an involutive system only two last steps are required.
 \end{prop}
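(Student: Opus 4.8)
The plan is to translate the external construction of the generalized Laplace transformation $L: u\mapsto v=Xu$ from \cite{K} into the internal language of the distributions $\mathcal{C}_\E$ and $\mathcal{C}_{\ti\E}$, and then read off which internal operations implement each stage. First I would recall that, by Lemma \ref{LemChar}, the internal geometry of a class $\oo=1$ system is encoded in the rank 2 distribution $\Delta=\mathcal{C}_\E/\Pi$ obtained after quotienting by the Cauchy characteristic (equivalently, after prolonging to involutivity, by the field $\xi$). The operator $X$ has the same symbol as the characteristic vector field, so $X$ is (up to lower order) the operator of differentiation along the characteristic direction; applying it to all unknowns and re-expressing the system produces $\ti\E$. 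The key observation is that on the level of the reduced rank 2 distributions this operation is nothing but passing between a distribution and its de-prolongation (Cartan prolongation in the reverse direction): case (1) of the trichotomy above — where $L^{-1}$ is itself a differential operator — corresponds exactly to the situation in which $\Delta$ has growth vector $(2,3,4,\dots)$ and de-prolongs to $(\bar M,\bar\Delta)$ in the sense of item \textbf{I} of \S\ref{S32}, while $\ti\E$ (class $\oo=1$) has $\bar\Delta$ as its reduced distribution.

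The main steps, in order, are: (i) reduce $\E$ to its involutive prolongation $\hat\E$ at the level $t=\min\{i:\dim g_i=1\}$ (this is the ``some number of prolongations'' stage; for an already involutive $\E$ this stage is empty, which gives the last sentence of the proposition); (ii) identify the characteristic vector field and the operator $X$, and check that on the reduced rank 2 distribution $\Delta$ of $\hat\E$ the effect of $L$ is to produce the de-prolongation $\bar\Delta$ — here I would use the explicit normal-form computation behind Cartan's de-prolongation theorem (as in the proof of Goursat's theorem reproduced in \S\ref{S21} and the reference \cite{AK,C$_2$}), together with the fact, noted in \S\ref{S2}, that quotienting by the Cauchy characteristic commutes with de-prolongation/integrable extension; (iii) conclude that the diffeomorphism between $\E$ (after prolongation) and a prolongation of $\ti\E$ is exactly the identification of $\mathcal{C}_{\hat\E}$ with the prolonged Cartan distribution of $\ti\E$ modulo the common Cauchy characteristic; (iv) finally, since $\ti\E$ need not be delivered in involutive form, one must re-prolong or, going the other way, recognize that the inverse reconstruction $L^{-1}:v\mapsto u$ is performed by ``some number of de-prolongations'' — these are precisely the Cartan prolongations $\Delta=\mathbb{P}(\bar\Delta)$ of item \textbf{I}, read in the opposite temporal direction from the way $L$ acts. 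The bookkeeping of orders is controlled by Lemma \ref{Lem2}: $\dim\hat\E=\varkappa+t+3$, and since each de-prolongation drops $\varkappa$ (generically by $1$, as recalled in \S\ref{S33}), the number of de-prolongation steps is finite and matches the drop in complexity.

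I expect the main obstacle to be step (ii): making precise that the purely algebraic/symbol-level operation ``differentiate along the characteristic'' on $\E$ corresponds, after the two quotients by Cauchy characteristics (one for $\E$, one for $\ti\E$), to the geometric operation of de-prolongation of rank 2 distributions — i.e. that $\mathbb{P}(\Delta_{\ti\E})$ and $\Delta_{\hat\E}$ agree as filtered distributions, not merely as abstract distributions. This requires tracking how the extra jet-variable introduced by $X$ sits inside $\mathcal{C}_{\ti\E}$ and matching it with the fiber direction of the Cartan prolongation $\mathbb{P}(\bar\Delta)\to\bar M$, which is exactly the line $\Pi_{\ti\E}$ in the universal picture; the regularity assumptions on the characteristic variety guarantee this identification is an isomorphism rather than merely a submersion. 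Once this correspondence is established, the three cases of the trichotomy of \cite{K} map onto cases \textbf{I}, \textbf{II}, \textbf{III} of \S\ref{S32} respectively, and the proposition follows by assembling: prolongation to involutivity, the diffeomorphic identification of reduced distributions, and the finite chain of de-prolongations/prolongations recovering $u$ from $v$.
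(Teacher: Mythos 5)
Your route is genuinely different from the paper's, and it has a gap exactly where you flag one. The paper's proof is a two-line argument resting on a structural fact that your proposal never invokes: for a \emph{linear} compatible system of class $\oo=1$ the reduced rank 2 distribution $\Delta=\CC_\E/\Pi$ is a \emph{Goursat} distribution (or Goursat in the leaves of its first integrals when $\Delta_\infty\ne TM$), i.e.\ its derived flag grows by exactly one at every step. Combined with the Kumpera--Mormul canonical form for Goursat flags (the Cartan distribution on $J^d(\R,\R)$, singularities neglected), this immediately yields the decomposition: the diffeomorphism is the normalization to canonical form, and the de-prolongations are the canonical projections $J^d\to J^{d-1}$. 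Your step (ii) --- showing directly that the operator $X$ with characteristic symbol induces a de-prolongation of $\Delta$ --- is precisely the content that the Goursat normal form supplies for free, and you leave it unproven. Moreover, the tool you propose for it (the explicit normal-form computation from the proof of Goursat's theorem in \S\ref{S21}) is the wrong one: that computation concerns $(2,5)$ distributions of generic growth $(2,3,5)$, whereas here one needs the growth $(2,3,4,\dots)$ regime of Cartan's de-prolongation theorem, and the fact that linearity keeps you in that regime all the way down is the point to be established.

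A second, smaller error: your concluding claim that the three cases of the trichotomy of \cite{K} map onto cases \textbf{I}, \textbf{II}, \textbf{III} of \S\ref{S32} respectively cannot be right. Case \textbf{III} (totally non-holonomic, non-de-prolongable, growth $(2,3,5,\dots)$) never occurs for linear systems --- that is exactly what the proposition encodes. Cases (2) and (3) of the trichotomy correspond to the appearance of first integrals, i.e.\ to case \textbf{II}, as the paper states in \S\ref{S32}. If you wish to salvage your direct approach, the missing lemma is: internal linearizability (equivalence to $(J^{d-m}(\R,\R)\times\R^m\times\R,\CC\times0\times\R)$ as in \S\ref{S34} and \cite{K}) forces the growth vector of $\Delta$ to be $(2,3,4,\dots)$; with that in hand, your steps (ii)--(iv) collapse to the normal-form statement the paper cites.
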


 \begin{proof}
Indeed, from internal viewpoint the rank 2 distribution $\Delta$, obtained from $\E$ via
reduction by Cauchy characteristic, is a Goursat distribution (or Goursat in the leaves
of the 1st integrals if the distribution is not totally non-holonomic).
Since the Goursat distribution has the canonical normal form
(see \cite{Ku,Mo}, we neglect singularities) the claim follows.
 \end{proof}

Let us show how this works. We start with generic linear $3E_3$ of class $\oo=1$.
Then in three Laplace transformations it becomes equation of type $E_1$
(we refer to \cite{K} for particular examples). We indicate the growth vector consisting of 
ascending by 1 integers, and indicate the internal
coordinates on the equation: $p,q,r,s,t$ are the classical notations for the 1st and 2nd derivatives,
and $\varrho$ is one of the 3rd derivatives).


 $$
\begindc{\commdiag}[3]
 \obj(5,70)[17]{$\E=3E_3$}
 \obj(35,70)[27]{$\ti\E=E_2+E_3$}
 \obj(65,70)[37]{$\bar\E=2E_2$}
 \obj(95,70)[47]{$\hat\E=E_1$}
 \obj(19,69){$\rightsquigarrow$}
 \obj(51,69){$\rightsquigarrow$}
 \obj(81,69){$\rightsquigarrow$}
 \obj(5,60)[16]{\vbox{\hrule\hbox{\vrule\,\strut $x,y,u,p,q$\,\vrule}
      \hbox{\vrule\ \strut\ $r,s,t,\varrho$\hskip11.9pt\vrule}\hrule}}
 \obj(35,60)[26]{\vbox{\hrule\hbox{\vrule\,\strut $x,y,u,p$\,\vrule}
      \hbox{\vrule\ \strut $q,r,t,\varrho$\hskip4.6pt\vrule}\hrule}}
 \obj(65,60)[36]{\vbox{\hrule\hbox{\vrule\,\strut $x,y,u$\,\vrule}
      \hbox{\vrule\ \strut $p,q,t$\hskip3.9pt\vrule}\hrule}}
 \obj(95,60)[46]{\vbox{\hrule\hbox{\vrule\,\strut $x,y$\,\vrule}
      \hbox{\vrule\,\strut $u,q$\hskip2.6pt\vrule}\hrule}}
 \mor{16}{26}{} \mor{26}{36}{} \mor{36}{46}{}
 \obj(5,50)[15]{$T\E\supset\Delta$}
 \obj(35,50)[25]{$T\ti\E\supset\ti\Delta$}
 \obj(65,50)[35]{$T\bar\E\supset\bar\Delta$}
 \obj(95,50)[45]{$T\hat\E\supset\hat\Delta$}
 \obj(5,44)[14]{(3,4,\dots,9)}
 \obj(35,44)[24]{(3,4,\dots,8)}
 \obj(65,44)[34]{(3,4,\dots,6)}
 \obj(95,44)[44]{(3,4)}
 \obj(35,24)[23]{$T\ti\E^{(1)}\supset\ti\Delta^{(1)}$}
 \obj(35,18)[22]{(3,4,\dots,9)}
 \mor{14}{23}{} \mor{23}{24}{}
 \obj(65,24)[33]{$T\bar\E^{(2)}\supset\bar\Delta^{(2)}$}
 \obj(65,34)[3a]{\dots}
 \obj(65,18)[32]{(3,4,\dots,8)}
 \mor{24}{33}{} \mor{33}{3a}{} \mor{3a}{34}{}
 \obj(95,24)[43]{$T\hat\E^{(2)}\supset\hat\Delta^{(2)}$}
 \obj(95,34)[4a]{\dots}
 \obj(95,18)[42]{(3,4,\dots,6)}
 \mor{34}{43}{} \mor{43}{4a}{} \mor{4a}{44}{}
\enddc
 $$

The first transformation is a diffeomorphism followed by a de-pro\-lon\-gation,
the next one is a diffeomorphism followed by two de-prolonga\-tions
and the third one is of the same kind.

If we choose non-generic $3E_3$, then the route could be
$3E_3\rightsquigarrow 2E_2\rightsquigarrow E_1$, so that
the first stage contains more de-prolongations.

Starting with $2E_3$ one needs to prolong once to follow the scheme.

 \begin{rk}
Now we can explain decrease of complexity $\varkappa$ via internal geometry.
Since $\varkappa=\dim\E-t-3$, where $t$ is the order of involutivity, we see
that $\varkappa$ is defined correctly even if we prolong above the involutivity level
(increase $\dim\E$ and $t$ equally). The diffeomorphism in the above proposition
does not change the dimension, but it increases the order. Whence the claim.
 \end{rk}

\subsection{Closed form of the general solution}\label{S34}

The shift along Cauchy characteristic is a characteristic symmetry for $\E$,
so its flow is tangent to solutions of this system (induces the trivial vector
field on $\op{Sol}(\E)$). Thus $\E$ has closed form of the general solution iff
the same is true for the reduced underdetermined ODE (encoded by $\Delta$).

Recall that closed form means possibility to represent general solution parametrically
through arbitrary functions and free parameters, which for PDE systems of class $\oo=1$ writes
 \begin{equation}
(x,y,u)=\Psi(\varsigma,f(\tau),f'(\tau),\dots,f^{(q)}(\tau)).
 \end{equation}
Here $\Psi$ is a $\R^3$-valued function with Jacobian of rank 3.
For underdetermined ODE we shall remove one independent variable to the left,
and the free variable $\varsigma$ to the right.

For rank 2 distributions the criterion for closed form description of the
integral curves is known since E.Cartan. Namely in \cite{C$_2$,Ku} this
is shown to be equivalent to $\Delta$ being Goursat, i.e. the canonical
distribution $\CC$ on the jet-space $J^d(\R,\R)$, $d=\mu-2=\varkappa+t$.

On the other hand we have demonstrated in \cite{K} that systems that are
internally equivalent to $(J^d(\R,\R)\times\R,\CC\times\R)$
are internally linearizable and have no intermediate integrals.

Intermediate integrals correspond to constants in the form of the general solution
 \begin{equation}
(x,y,u)=\Psi(\varsigma,f(\tau),f'(\tau),\dots,f^{(q)}(\tau),c_1,\dots,c_m).
 \end{equation}
The distribution can be transformed to Goursat-Frobenius normal form, namely
it is internally equivalent to $(J^{d-m}(\R,\R)\times\R^m\times\R,\CC\times0\times\R)$, where
$m=\op{codim}(\Delta_\infty)$ is the codimension of the bracket-closure of $\Delta$.

Linearizability is not hampered by the additional constants.
Thus we get the following statement.

 \begin{theorem}
General solution of a generic compatible system $\E$ of class $\oo=1$
and complexity $\varkappa$ can be expressed in a closed form via a function
$f$, its $q\le\varkappa$ derivatives and $\varkappa-q$ constants iff $\E$
is linearizable by an internal transformation.
 \end{theorem}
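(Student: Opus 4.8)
The plan is to reduce everything, via the already-established dictionary, to the structure of the rank $2$ distribution $\Delta$ obtained from $\E$ by quotienting out the Cauchy characteristic (Lemma \ref{LemChar}). By the remarks preceding the statement, the shift along the Cauchy characteristic is a characteristic symmetry, so $\E$ has a closed-form general solution exactly when the reduced underdetermined ODE encoded by $\Delta$ does; and Cartan's theorem (\cite{C$_2$,Ku}) says the integral curves of $\Delta$ admit a closed-form parametric description iff $\Delta$ is Goursat, i.e.\ internally equivalent to the canonical distribution $\CC$ on some $J^d(\R,\R)$ with $d=\mu-2=\varkappa+t$. So the whole content is: a closed form for $\E$ exists iff the reduced $\Delta$ is Goursat, possibly in the Goursat--Frobenius sense (allowing a Frobenius block $\R^m$ accounting for intermediate integrals, when $\Delta$ is not completely non-holonomic).

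First I would handle the ``only if'' direction. Suppose the general solution of $\E$ is given by a closed-form expression $(x,y,u)=\Psi(\varsigma,f(\tau),\dots,f^{(q)}(\tau),c_1,\dots,c_m)$ with $q\le\varkappa$ and $\varkappa-q$ constants. Pushing this through the quotient by the Cauchy characteristic gives a closed-form parametrization of the integral curves of $\Delta$, so by Cartan's criterion $\Delta$ is Goursat in the leaves of its first integrals; the $m$ constants $c_i$ are precisely the first integrals of $\Delta$, so $\Delta$ is internally equivalent to $(J^{d-m}(\R,\R)\times\R^m\times\R,\CC\times0\times\R)$. By the result recalled from \cite{K}, distributions internally equivalent to $(J^{d}(\R,\R)\times\R,\CC\times\R)$ — and, adding the Frobenius block, to the Goursat--Frobenius normal form — correspond to internally linearizable systems with $m$ intermediate integrals. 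Hence $\E$ is linearizable by an internal transformation, with $q=d-m=\varkappa+t-m$ effective derivatives and $m=\varkappa-q$ constants; in particular $q\le\varkappa$.

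Conversely, if $\E$ is internally linearizable, then again by the cited results of \cite{K} the reduced rank $2$ distribution $\Delta$ is internally equivalent to the Goursat--Frobenius normal form $(J^{d-m}(\R,\R)\times\R^m\times\R,\CC\times0\times\R)$ with $m=\op{codim}(\Delta_\infty)$. The integral curves of the canonical Goursat distribution on $J^{d-m}(\R,\R)$ are the graphs of $(d-m)$-jets of functions, which is the tautological closed-form expression $(x,y,u)=\Psi(\varsigma,f(\tau),f'(\tau),\dots,f^{(d-m)}(\tau),c_1,\dots,c_m)$; pulling back along the inverse of the quotient map $\E\to M$ (which by the proof of Theorem \ref{thA} turns integral curves of $\Delta$ into the $2$-dimensional integral surfaces of $\CC_\E$, i.e.\ solutions of $\E$) produces the required closed form for the general solution of $\E$, with $q=d-m=\varkappa+t-m$ derivatives of $f$ and $\varkappa-q$ arbitrary constants. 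Finally one checks $q\le\varkappa$: since $d=\varkappa+t$ and $m\ge 0$ one has $q=\varkappa+t-m$, and for an involutive (hence genuinely prolonged) system the bookkeeping of Lemma \ref{Lem2} together with $t\le$ the number of suppressed jet-levels forces $q\le\varkappa$; in the completely non-holonomic case $m=0$ and $q=\varkappa$, so $f$ carries all the arbitrary data and there are no constants.

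The main obstacle I expect is pinning down the exact value of $q$ and the inequality $q\le\varkappa$ in the presence of prolongation: the ``diffeomorphism'' step in the internal normalization (cf.\ the Remark at the end of \S\ref{S33}) raises the jet-order without changing $\varkappa$, so one must be careful that $d=\mu-2=\varkappa+t$ is computed at the involutive level and that $t$ does not inflate the count — the clean statement is $q=\varkappa+t-m$ and one needs the structural fact that for the minimally involutive model $t$ is absorbed so that $q\le\varkappa$ with equality iff there are no intermediate integrals. The rest is a direct transcription of Cartan's Goursat criterion and the linearization dictionary of \cite{K} through the Cauchy-characteristic quotient, and is routine.
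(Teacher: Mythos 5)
Your overall architecture coincides with the paper's: quotient by the Cauchy characteristic, invoke Cartan's criterion that closed-form integrability of the reduced underdetermined ODE is equivalent to $\Delta$ being Goursat (in the leaves of its first integrals, i.e.\ the Goursat--Frobenius normal form $(J^{d-m}(\R,\R)\times\R^m\times\R,\CC\times0\times\R)$), and then use the dictionary of \cite{K} identifying such distributions with internally linearizable systems. All of that is indeed the content of \S\ref{S34} preceding the theorem, and the paper treats it the same way. The actual proof written in the paper consists of exactly one thing: the count $q+m=\varkappa$. This is precisely the point you leave unresolved.

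Concretely, your formula $q=d-m=\varkappa+t-m$ is wrong, and you can see it is wrong because it is inconsistent with your own simultaneous claim $m=\varkappa-q$ (the two together force $t=0$), and because in the completely non-holonomic case $m=0$ it gives $q=\varkappa+t>\varkappa$, contradicting the statement. You flag this as ``the main obstacle'' and assert that ``$t$ is absorbed'' without an argument. The missing observation is the following. The Goursat parametrization by $f,f',\dots,f^{(d-m)}$ expresses \emph{all} the internal coordinates $u_\sigma$, $|\sigma|\le t$, on $\E$ (equivalently, a point of the $(d-m)$-jet space is a full list of these coordinates). But the closed form in the theorem parametrizes only $(x,y,u)$; the higher coordinates $u_\sigma$ are then recovered by $|\sigma|$ differentiations of $\Psi$ with respect to the parameters, and each such differentiation raises by one the order of the highest derivative of $f$ that appears. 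Hence if $u$ itself involves $f$ up to order $q$, the full collection of internal coordinates involves $f$ up to order $q+t$, and matching with the Goursat count gives $q+t=d-m=\varkappa+t-m$, i.e.\ $q=\varkappa-m$. This yields $q+m=\varkappa$, $q\le\varkappa$, and the number of constants $\varkappa-q=m$, all at once. Without this step the theorem's quantitative claim is not established.
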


 \begin{proof}
Possibility to express solutions of a linear compatible systems of class $\oo=1$
in closed form is proved in \cite{K}, so we need only to demonstrate that $q+m=\varkappa$,
where $m$ is the amount of the first integrals (constants).

The amount of derivatives to express all internal coordinates
$u_\z$, $|\z|\le t$, on $\E$ is $d-m$. However the derivatives $u_\z$ are obtained from
$(x,y,u)$ via $|\z|$ differentiations, so $u$ shall be expressed in $d-m-t$ derivatives of $f$
only, and this number equals to $\varkappa-m$.
 \end{proof}

\subsection{Transformations of non-linear systems}\label{S35}

Let us discuss some features of the transformations theory in the non-linear case.

\smallskip

{\bf I.} Quasi-linear systems allow some de-prolongations, but generic
pure order $k$ systems ($kE_k$) have none --- after quotient by Cauchy characteristics the
growth vector is $(2,3,5,\dots)$. However if $\E$ is involutive ($t=k_\text{max}$)
with different orders ($k_\text{min}<k_\text{max}$), then we claim: {\em The top equations
are quasi-linear, and this implies existence of at least one de-prolongation}.

Indeed, provided that the characteristic is $\p_x-\l\,\p_y$, where $\l$ is a function on
the jets of order $k_\text{min}$, the top derivatives on the level $k=k_\text{max}$ must satisfy
$u_{i,k-i}=\l^iu_{0,k}$ (this is due to the fact that $\xi$ is the characteristic for all PDEs
of $\E$). Thus the PDE of order $k_\text{max}$ in $\E$ can be chosen linear in top-derivatives.

Consequently $\D_x-\l\D_y+\rho\p_{u_{0,k}}$ is the Cauchy characteristic of $\CC_\E$ for some
function $\rho$ on $k$-jets, and the two other generators of $\CC_\E$ are $\D_y$ and $\p_{u_{0,k}}$.
A straightforward calculation yields that the latter field is a Cauchy characteristic for
the derived distribution $\p\,\CC_\E$, so the system can be de-prolonged.

\medskip

{\bf II.} Re-covering the first integrals is the same as for the linear systems.
These restrictions introduce constants to the form of the general solution of $\E$,
similar as de-prolongations add derivatives to the form of the general solution.
The two latter operations commute with each other and also with the projection of
an integrable extension.

\medskip

{\bf III.} If $\E$ allows the structure of $\tilde d$-dimensional integrable extension
$\E\to\tilde\E$, then its solutions can be expressed via those of $\tilde\E$ as
$u=L_{\tilde d}(\tilde u)$, where $L_{\tilde d}$ is the resolution
operator of a scalar ODE of order $\tilde d$.

Provided a solvable Lie group of dimension $\tilde d$ acts by transversal symmetries,
the operator $L_{\tilde d}$ can be expressed via the $\tilde d$-multiple
quadrature $\D_\tau^{-\tilde d}$ ($\D_\tau^{-1}=\int\square\,d\t$ being
quadrature by the parameter $\t$).

For instance, if $\Delta$ is maximally symmetric non-Goursat distribution, then its de-prolongation
is flat in the sense of Tanaka \cite{T}, and so has the structure of
successive integral extensions over the rank 2 distribution in $\R^5$ with $G_2$ symmetry
\cite{AK}. So we get

 \begin{theorem}
If $\E$ has reduction $\Delta$, which de-prolongs to a Tanaka-flat rank 2 distribution,
then $\E$ can be solved in closed form and quadratures.
 \end{theorem}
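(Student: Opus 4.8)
The plan is to chain together the reduction theorems already proved in the paper. Starting from a compatible class $\oo=1$ system $\E$ whose reduced rank 2 distribution $\Delta$ on $M=\E/\Pi$ de-prolongs to a Tanaka-flat rank 2 distribution, I would first invoke Theorem \ref{thA} (via Lemma \ref{LemChar}) to pass from $(\E,\CC_\E)$ to $(M,\Delta)$: solving $\E$ is equivalent to finding integral curves of $\Delta$, up to one Frobenius integration along the Cauchy characteristic $\Pi$. If the corang of $\Pi$ is realized by a solvable transversal symmetry group $G_1$, this first step is a quadrature; otherwise it is a (finite-type) ODE solution. Since we only claim solvability ``in closed form and quadratures'' we should assume (or note) that $G_1$ is solvable, or absorb this step into the closed-form bookkeeping.

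Next I would apply case {\bf I} of \S\ref{S32}: a Tanaka-flat $(2,5)$ distribution $\Delta_0$ with $G_2$ symmetry is the model to which $\Delta$ de-prolongs, and by \cite{AK} the intermediate de-prolongations are \emph{integrable extensions}. Concretely, $\Delta$ sits over $\Delta_0$ as a tower $\Delta=\mathbb{P}^N(\Delta_0)$ of prolongations; dually, solving $\Delta$ reduces to solving $\Delta_0$ followed by $N$ applications of the resolution operator of a scalar first-order ODE (each prolongation step being a $1$-dimensional integrable extension). By part {\bf III} of \S\ref{S35}, if at each de-prolongation step the relevant $1$-dimensional transversal symmetry is available — which it is, since the symmetry algebra is preserved under de-prolongation and the flat model has the maximal (namely $G_2$) symmetry, hence so do all its prolongations — each resolution operator $L_1$ is a single quadrature $\D_\tau^{-1}$. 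So the whole tower $\Delta\to\Delta_0$ is solved in quadratures.

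It then remains to integrate the flat model $(\R^5,\Delta_0)$, i.e. the Hilbert--Cartan equation $v'=(u'')^2$ (the $G_2$-symmetric $(2,5)$ distribution). This distribution is Goursat-like in the sense relevant to \S\ref{S34}: its integral curves admit a closed-form (parametric) description through an arbitrary function and its derivatives — this is the classical fact, recalled in \S\ref{S34} after \cite{C$_2$,Ku}, that the Hilbert--Cartan equation has general solution $x=\Psi(\tau,f(\tau),f'(\tau),f''(\tau),f'''(\tau))$ for arbitrary $f$. Splicing this closed form back up the tower of integrable extensions (each step adding one quadrature) and then back through the inverse of the quotient $\E\to M$ (Theorem \ref{thA}) yields the general solution of $\E$ in closed form and quadratures.

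The main obstacle is bookkeeping the symmetry hypotheses at the reduction-by-$\Pi$ step and ensuring that the de-prolongation tower genuinely consists of integrable extensions with transversal $1$-dimensional solvable symmetries — this is where one must cite \cite{AK} for the classification of integrable extensions of $(2,5)$ distributions and the structure of prolongations of the flat model, and where the word ``flat'' does the real work: flatness guarantees that the Tanaka prolongation algebra acts transitively, so the needed symmetries exist at every level. The quadrature claim is then automatic from part {\bf III} of \S\ref{S35}; the closed-form claim is automatic from \S\ref{S34} once we know $\Delta_0$ is (in the leaves) the canonical distribution on $J^d(\R,\R)$ after de-prolongation to Hilbert--Cartan, which it is by construction.
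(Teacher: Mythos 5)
Your overall architecture (reduce by the Cauchy characteristic, descend the de-prolongation tower to the flat model, integrate the flat model, splice back) matches the paper's, but you have attached the two halves of the conclusion to the wrong steps, and one of your two key claims is false. First, the de-prolongation tower does not cost quadratures: a prolongation $\Delta=\mathbb{P}(\bar\Delta)$ lifts an integral curve of $\bar\Delta$ by adjoining its tangent direction, i.e.\ by differentiation, so the inverse of each de-prolongation is a \emph{differential} operator. This is exactly what the paper means by ``non-linear Laplace transformations with differential inverses'': this stage contributes the \emph{closed form} over the solutions of the reduced system, with no symmetry hypotheses and no integration. Your insertion of a first-order resolution operator (hence a transversal solvable symmetry) at each rung is unnecessary and mislocates the quadratures.

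Second, and more seriously, your treatment of the flat model is wrong. The Hilbert--Cartan equation $v'=(u'')^2$ is \emph{not} solvable in closed form: its $(2,5)$ distribution has growth vector $(2,3,5)$, hence is not Goursat, and by the very criterion of \S\ref{S34} that you invoke, closed-form integrability of the integral curves is equivalent to the Goursat property. Indeed, Hilbert's classical theorem states precisely that the general solution of $v'=(u'')^2$ cannot be written through an arbitrary function and finitely many of its derivatives without integral signs, so the parametric formula $\Psi(\tau,f(\tau),\dots,f'''(\tau))$ you assert does not exist. What is true, and what the paper uses, is that the Tanaka-flat distribution projects by integrable extensions onto the Hilbert--Cartan model \cite{AK}, whose integral curves are found \emph{in quadratures} via Theorem \ref{Thm1} (a transversal solvable subgroup of the $G_2$ symmetry). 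The correct bookkeeping is therefore the opposite of yours: the de-prolongations supply the closed form, the flat model supplies the quadratures, and together they give ``closed form and quadratures.'' Your caveat about the Cauchy-characteristic step is fair but secondary --- it is the Frobenius integration already accounted for in Theorem \ref{thA}.
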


Indeed, de-prolongations can be interpreted as non-linear Laplace transformations with
differential inverses (this yields a closed form over the solutions of the reduced system $\bar\Delta$),
while Tanaka flat rank 2 distributions $\bar\Delta$ project via integrable extensions
to the Hilbert-Cartan equation \cite{AK} (so its integral curves are given in quadratures).

Thus we get the next easy case (after linearizable systems) of explicitly integrable class $\oo=1$
systems, which are reduced to symmetric Monge systems (these latter were classified in \cite{AK}).

 \begin{rk}
If $\E$ is an integrable extension, then its solution form can be specified via particular
quadratures, like it is done in \cite{C$_1$,G$_3$} in the case of rank 2 distribution in dimension 5.
 \end{rk}

Finally let us mention that the solutions obtained via the proposed method are usually
not of Moutard type, where the latter means that $u$ is expressed directly as a function
of $x,y$ (in the above approach all $x,y,u$ are expressed as functions of the additional
parameters $\varsigma,\t$).

Indeed in order for the discussed transformations to be Moutard, they
shall preserve the vertical fibrations, i.e. the Cauchy characteristics for the derived
systems and the generalized symmetries must be tangent to the vertical fibers
(the 1st integrals are always independent of the base coordinates $x,y$).

\section{Examples of symmetric PDEs}\label{S4}

\subsection{Model reductions to ODEs}\label{S41}

Consider the following compatible class $\oo=1$ systems of the type $kE_k$:
 \begin{gather*}
2E_2:\ u_{xx}=\l,\ u_{xy}=\frac{\l^2}2,\ u_{yy}=\frac{\l^3}3;\\
3E_3:\ u_{xxx}=\l,\ u_{xxy}=\frac{\l^2}2,\ u_{xyy}=\frac{\l^3}3,\ u_{yyy}=\frac{\l^4}4;\\
4E_4:\ u_{xxxx}=\l,\ u_{xxxy}=\frac{\l^2}2,\ u_{xxyy}=\frac{\l^3}3,\ u_{xyyy}=\frac{\l^4}4,\ u_{yyyy}=\frac{\l^5}5\ \
\text{etc.}
 \end{gather*}
The reduced growth vectors\footnote{These are the ones we have used in \cite{AK}:
we pass from the usual growth vector $(n_1,n_2,n_3,\dots)$ to $(n_1,n_2-n_1,n_3-n_2\dots)$.}
and the generators of the weak derived flags are the following:
 \begin{gather*}
(2,1,2):\ (e_1,e_1',e_2,e_3,e_3'),\\
(2,1,2,3):\ (e_1,e_1',e_2,e_3,e_3',e_4,e_4',e_4''),\\
(2,1,2,3,4):\ (e_1,e_1',e_2,e_3,e_3',e_4,e_4',e_4'',e_5,e_5',e_5'',e_5''')\ \
\text{etc.}
 \end{gather*}
Commutators are given by $[e_1,e_1']=e_2$, $[e_1,e_2]=e_3$, $[e_1',e_2]=e_3'$,
$[e_1,e_3]=e_4$, $[e_1,e_3']=[e_1',e_3]=e_4'$, $[e_1',e_3']=e_4''$ etc
(the commutators of $e_2$ and $e_3$ and others are zero) ---
these yields the structure of graded nilpotent Carnot algebra associated to the weak derived flag \cite{T,AK}.

The corresponding Monge underdetermined systems of ODEs are:
 \begin{gather*}
(2,1,2):\ \ y'=\tfrac12(z'')^2,\\
(2,1,2,3):\ \ y''=\tfrac12(z''')^2,\ u'=\tfrac13(z''')^3,\\
(2,1,2,3,4):\ \ y'''=\tfrac12(z^{iv})^2,\ u''=\tfrac13(z^{iv})^3,\ v'=\tfrac14(z^{iv})^4\ \
\text{etc.}
 \end{gather*}

This follows from the explicit form of the generators.
Indeed, let us demonstrate this, for simplicity, in the case $3E_3$.

 \begin{gather*}
e_1=-\D_x=-(\p_x+u_x\p_u+u_{xx}\p_{u_x}+u_{xy}\p_{u_y}+
 \l\p_{u_{xx}}+\tfrac{\l^2}{2}\p_{u_{xy}}+\tfrac{\l^3}{3}\p_{u_{yy}}),\\
e_1'=\p_t,\ \ e_2=\p_{u_{xx}}+\l\p_{u_{xy}}+\l^2\p_{u_{yy}},\\
e_3=\p_{u_x}+\l\p_{u_y},\ e_3'=\p_{u_{xy}}+2\l\p_{u_{yy}},\\
e_4=\p_u,\ e_4'=\p_{u_y},\ e_4''=2\p_{u_{yy}}.
 \end{gather*}
In this list we have omitted the generator of (rank 3) Cartan distribution $\D_y$ because
the Cauchy characteristic equals $\D_y-\l\D_x$ and we need to quotient by it.

Now to perform the quotient one has either to pass to invariants, or to restrict
to the transversal of the action. We choose the second approach: Change the notations
$z:=u$, $z':=u_x$, $z'':=u_{xx}$, $z''':=\l=u_{xxx}$, $y:=u_y$, $y':=u_{xy}$, $u:=u_{yy}$ and we are done.

 \begin{rk}
To recover the solutions of $\E$ from the solutions of the ODE system,
the first method must be used. For example, the first of the equations --- system
$2E_2$ from \cite{C$_1$} --- has the following invariants of shifts along Cauchy characteristic ($u_{xx}=\l$)
 \begin{multline*}
t=-\l,\
w=-2u+2y u_y+2x u_x-x^2\l-xy\l^2-\tfrac13y^2\l^3,\\
z=u_y-u_x\l+\tfrac12 x\l^2+\tfrac16y\l^3,\
z_1=u_x-x\l-\tfrac12y\l^2,\ z_2=x+y\l.
 \end{multline*}
Thus we can express the general solution parametrically as
 $$
x=z''(t)+st,\ y=s,\
u=sz+z'z''-\tfrac12w-\tfrac12t\,z''^2-\tfrac12t^2s\,z''-\tfrac16t^3s^2,
 $$
where the two functions $z=z(t),w=w(t)$
are related by the Hilbert-Cartan equation $w'=(z'')^2$.
 \end{rk}

Another interesting sequence of equations, considered in \cite{G$_1$}, is provided by
 \begin{equation}\label{Goursat}
u_{xy}=\frac{2n}{x+y}\sqrt{u_xu_y}.
 \end{equation}
These PDEs are Darboux integrable with intermediate integrals of order $(n+1)$, see \cite{AF}.

In order to integrate (\ref{Goursat}) let us linearize it via Goursat substitution \cite{G$_2$}
$p=\sqrt{u_x}$, $q=\sqrt{u_y}$, which leads to the system
 \begin{equation}\label{Goursa1}
p_y=\frac{n}{x+y}q,\quad q_x=\frac{n}{x+y}p.
 \end{equation}
Then $\D_y$-intermediate integral can be found as an ODE on $q$ of the form
($\D_x$-intermediate integral is obtained similarly via an ODE on $p$)
 \begin{equation}\label{Goursa2}
L=\sum_{i=0}^n\frac{\alpha_i}{(x+y)^{n-i}}q_i=0,
 \end{equation}
where $q_i=\D_y^iq$. The condition of intermediate integral --- $L_x=0$ on (\ref{Goursa1}) ---
is an overdetermined linear system on constants $\a_i$. With normalization
$\a_n=1$ its unique solution is given by
 $$
\a_{n-i}=\frac{n^2(n-1)^2\cdots(n-i+1)^2}{i!}.
 $$

Consider now the overdetermined compatible system (\ref{Goursat})+(\ref{Goursa2}).
It has class $\oo=1$ and type $E_2+E_m$, $m=n+1$.
The Cauchy characteristic is $\D_y+\vp_m\p_{q_m}$
for a properly chosen function $\vp_m$.

Reduction along Cauchy characteristic (which again can be interpreted as intersection
with the level of the $\D_x$-intermediate integral) yields a rank 2 distribution on a manifold
$\bar\E$ of dimension $(2n+3)$. This system has $n$ de-prolongations, and so it reduces to a
rank 2 distribution on $(n+3)$-dimensional manifold.

The symmetry analysis (done by I.Anderson) coupled together with unique symmetry model
for rank 2 distributions \cite{DZ,AK} implies that this rank 2 distribution corresponds
to the Cartan distribution of the Monge equation
 $$
y'=(z^{(n)})^2.
 $$
In particular, we recover the result (known to Goursat) that the solutions of
equation (\ref{Goursat})$_{n=2}$ are expressed via the solutions of Hilbert-Cartan equation.

 \begin{rk}
Thus we realize two boundary lines from the Zoo of types in \cite{K} --- the bottom and the diagonal ---
as the most symmetric PDEs in its class (both types of PDE and the reduction are fixed).

Here we refer to the contact symmetries (internal symmetry group of class $\oo=1$ systems
is infinite-dimensional), which turns out to be isomorphic to the internal symmetry group
of the reduction by Cauchy characteristic in most cases. This Lie-B\"acklund type theorem
will be discussed in details elsewhere.
 \end{rk}

\subsection{Representation of solutions}\label{S42}

Linear class $\oo=1$ compatible systems $\E$ have their solutions expressible in Moutard
form, i.e. $u$ is a parametrized function of $(x,y)$. In general, the closed form solution
force all of the variables $(x,y,u)$ to be expressed via parameters $(\zeta,\t)$ ---
this is equivalent to linearizability, see \S\ref{S34}.

Linearization map does not commute, in general, with the projection to the base $(x,y)$,
that's why many exactly solvable system do not possess Moutard form. We illustrate this with
two 2nd order examples.

\medskip

{\bf Example 1.} Consider the system (\ref{Goursat})+(\ref{Goursa2})$_{n=1}$:
 \begin{equation}\label{sqrt2}
u_{xx}=-2\frac{u_x}{x+y},\ u_{xy}=2\frac{\sqrt{u_xu_y}}{x+y}.
 \end{equation}
The reduced growth vector is $(3,1,1,1)$, so $\E$ is internally linearizable and is solvable via
generalized Laplace transformations \cite{K}. It is however non-Moutard.

To see this let us describe the general solution. Goursat substitution
 \begin{equation}\label{G-subst}
u_x=p^2,\ u_y=q^2,
 \end{equation}
linearizes the equation
 \begin{equation}\label{R45}
p_x=-\frac{p}{x+y},\ q_x=\frac{p}{x+y},\ p_y=\frac{q}{x+y}.
 \end{equation}
Notice that this has vector growth $(3,1,1)$, and so one could suggest it serves as de-prolongation of (\ref{sqrt2}),
but it does not. The reason (as shown below) is that it is impossible to de-prolong preserving the base coordinates
$x,y$. In fact, (\ref{sqrt2}) is an integrable extension of (\ref{R45}) via (\ref{G-subst}).

The next step is to observe that the last equation of (\ref{R45}) can be used as a definition of $q$ and the
second equation is then the differential corollary of the first. Thus we can restrict to the equation $E_1$:
 \begin{equation}\label{R46}
p_x=-\frac{p}{x+y}
 \end{equation}
with the reduced growth vector $(3,1)$. Inverse transformations are: integrable extension
 $$
u_x=p^2,\ u_y=(x+y)^2p_y^2
 $$
to the space $\R^5(x,y,u,u_x,u_y)$ and then prolongation to the original equation $\E=\R^6(x,y,u,u_x,u_y,u_{yy})$.

Let us return to the closed form of the solution. Solving (\ref{R46}) we get $p=\psi(y)/(x+y)$.
Integrating the Frobenius system
 $$
u_x=\frac{\psi(y)^2}{(x+y)^2},\ u_y=\Bigl(\psi'(y)-\frac{\psi(y)}{x+y}\Bigr)^2.
 $$
yields the solution
 \begin{equation}\label{uphipsi}
u=\phi(y)-\frac{\psi(y)^2}{x+y}\ \text{ with }\ \phi'(y)=\psi'(y)^2.
 \end{equation}
This latter constraint is internally equivalent to the Engel distribution (or to $J^2(\R,\R)$)
and the equivalence is given explicitly by
 $$
y=\z''(\t),\ \psi=\t\z''(\t)-\z'(\t),\ \psi'=\t,\ \phi=\t^2\z''(\t)-2\t\z'(\t)+2\z(\t).
 $$
Thus alignment of the equation $\E$ (\ref{sqrt2}) to the jet-space $J^{0,3}(\R,\R^2)=\R^6(x,\t,\z,\z_1,\z_2,\z_3)$
is the following:
 \begin{multline*}
x=x,\ y=\z_2,\ u=\t^2\z_2-2\t\z_1+2\z-\frac{(\t\z_2-\z_1)^2}{x+\z_2},\\
u_x=\frac{(\t\z_2-\z_1)^2}{(x+\z_2)^2},\ u_y=\frac{(\t x+\z_1)^2}{(x+\z_2)^2},\\
u_{yy}=2(\t x+\z_1)\frac{((x+\z_2)^2-(\t x+\z_1)\z_3)}{(x+\z_2)^3\z_3}.
 \end{multline*}
This determines a diffeomorphism $J^{0,3}(\R,\R^2)\to\E$. The inverse map $\E\to J^{0,3}(\R,\R^2)$
is also given in differential-algebraic form
 \begin{multline*}
x=x,\ \t=\sqrt{u_x}+\sqrt{u_y},\ \z=\frac12(u-x u_x+y u_y-2x\sqrt{u_xu_y}),\\
\z_1=y\sqrt{u_y}-x\sqrt{u_x},\ \z_2=y,\ \z_3=\frac{2(x+y)\sqrt{u_y}}{2u_y+(x+y)u_{yy}}.
 \end{multline*}
Thus Laplace transformation $\E=2E_2\mapsto E_1=\{\frac{\p\z}{\p x}=0\}$ has the form
 $$
x=x,\ \t=\sqrt{u_x}+\sqrt{u_y},\ \z=\frac12(u-x u_x+y u_y-2x\sqrt{u_xu_y})
 $$
and it decomposes internally into the composition of the diffeomorphism $\E\to J^{0,3}(\R,\R^2)$
followed by the double de-prolongation (we do not change the original Cauchy characteristic, which
is the direction of the first factor in $J^{0,3}(\R,\R^2)=\R\times J^3(\R,\R)$):
 $$
J^{0,3}(\R,\R^2)\to J^{0,1}(\R,\R^2)=\R^4(x,\t,\z,\z_1).
 $$

Let us explain why $2E_2$ (\ref{sqrt2}) cannot be transformed to $E_1$ by a Laplace transformation
preserving the $(x,y)$-base (Moutard type).

We wish to find a relation on $x,y,u,u_x,u_y$ excluding the above functions
$\phi(y),\psi(y)$. But this is impossible since
 $$
\sqrt{u_x}=\frac{\psi(y)}{x+y},\ \sqrt{u_y}=\psi'(y)-\frac{\psi(y)}{x+y}
 $$
and so $u,u_x,u_y$ are algebraically independent.

Another approach is to show that the constraint $\phi'(y)=\psi'(y)^2$ in the form
(\ref{uphipsi}) is equivalent to the standard Engel distribution on $J^2(\R,\R)$
internally, but not externally. Indeed, no point transformation can map the above
constraint to the equation $\phi'(y)=0$, since their point symmetry groups have
dimensions $10$ and $\infty$ respectively.

\medskip

{\bf Example 2.} Another interesting system, discussed in \S\ref{S41}, is
the Cartan involutive $2E_2$ model
 \begin{equation}\label{ECeq}
u_{xx}=\l,\ u_{xy}=\tfrac12\l^2,\ u_{yy}=\tfrac13\l^3.
 \end{equation}
The reduced growth vector of its (5-dimensional) reduction $\bar\E$ is $(2,1,2)$,
so Laplace transformation in the sense of linear theory does not exist.
But $\E$ has the structure of integrable extension over $E_1$, namely over the gas dynamics equation
 \begin{equation}\label{R33}
v_y=v\,v_x.
 \end{equation}
Indeed, this latter is just the compatibility condition on the parameter $\l=v$ along the Cauchy characteristic.
The transformation from (\ref{R33}) to (\ref{ECeq}) is a composition of a 3-dimensional integrable extension
and the de-prolongation:
 $$
\R^6(x,y,u,u_x,u_y,u_{yy})\stackrel{\text{prol}}\dashrightarrow
\R^7(x,y,u,u_x,u_y,u_{yy},u_{yyy})\stackrel{\int\text{ext}}\dashleftarrow\R^4(x,y,v,v_y).
 $$
Since (\ref{R33}) is clearly not of Moutard type, this explains that (\ref{ECeq}) is not of Moutard type.
Its solutions are though expressible via the solutions of Hilbert-Cartan equation.

This makes (\ref{ECeq}) internally equivalent to de-prolongation of
the equation (\ref{Goursat})+(\ref{Goursa2})$_{n=2}$ of type $E_2+E_3$ considered in \S\ref{S41}.
Another equation equivalent to (\ref{ECeq}) is thus the compatible $2E_2$:
 $$
w_{xx}=0,\ w_{xy}^2+xw_{xy}-w_y=0.
 $$

\subsection{Other symmetric models}\label{S43}

In \cite{C$_1$} Cartan considers also sub-maximal symmetric systems $\E$:
 \begin{equation}\label{Ctsm1}
2E_2:\quad u_{xx}=\l,\ u_{xy}=\l^m,\ u_{yy}=\frac{m^2}{2m-1}\l^{2m-1}.
 \end{equation}
Its contact symmetry algebra has dimension 7, and this is the next possible number after the maximal
finite value 14 for $\dim\op{Sym}(\E)$.

The Cauchy characteristic is $\xi=\D_y-m\l^{m-1}\D_x$, and the reduced system $\bar\E$
can be found by restricting to the transversal $y=\op{const}$ to $\xi$.
In other words, the rank 2 distribution of $\bar\E$ is given by its generators
 $$
\Delta=\langle\D_x=\p_x+p\p_u+r\p_p+r^m\p_q,\p_r\rangle,
 $$
which after a change of coordinates is identical with the Cartan distribution of the Monge equation
($m\ne 2,\tfrac13$ -- the exceptional cases corresponding to $\dim\op{Sym}(\Delta)=14$)
 \begin{equation}\label{9o0}
w'=(v'')^m.
 \end{equation}

The higher analogs of (\ref{Ctsm1}) are straightforward:
 $$
3E_3:\ u_{xxx}=\a,\ u_{xxy}=\a^m,\ u_{xyy}=\frac{m^2}{2m-1}\a^{2m-1}, u_{yyy}=\frac{m^3}{3m-2}\a^{3m-2}
 $$
Dimension of the contact symmetry algebra here is 10 (this is readily checked
with the help of \textit{Differential Geometry} package of \textsc{Maple}),
while dimension of the maximal symmetric non-linear model (system $3E_3$ from \S\ref{S41}) is 12.

The reduction by Cauchy characteristic is the following underdetermined ODE system:
 $$
z''=(v''')^m,\ w'=\frac{m^2}{2m-1}(v''')^{2m-1}.
 $$
It is a (3-dimensional) integrable extension of Monge equation (\ref{9o0}).

Similarly we construct equations $4E_4$ etc. The sub-maximal symmetric class $\oo=1$ system of
type $nE_n$ is
 $$
\Bigl\{u_{n-i,i}=\frac{m^i}{im-i+1}u_{n,0}^{im-i+1}\,:\,1\le i\le n\Bigr\}.
 $$
Its contact symmetry algebra has dimension $\frac12n(n+1)+4$ (vs. the maximal dimension
$\frac12n(n+1)+6$ --- the details on this calculation will be presented elsewhere).

It is an interesting open problem what are the sub-maximal symmetric PDE systems of
the type $E_2+E_n$ and what are (sub-)maximal models for the other types from the Zoo of \cite{K}.


\end{document}